\documentclass[11pt]{amsart}
\usepackage{comment}
\usepackage{amsmath,amssymb,mathrsfs, xcolor, geometry}
\usepackage{hyperref}
\hypersetup{linkbordercolor = {white}, citebordercolor = {white},}

\usepackage[alphabetic]{amsrefs}

\usepackage[alphabetic]{amsrefs}

\usepackage{float}

\numberwithin{equation}{section}

\theoremstyle{definition}
\newtheorem{thm}[equation]{Theorem} 
\theoremstyle{definition}
\newtheorem{definition}[equation]{Definition}
\newtheorem{prop}[equation]{Proposition} 
\newtheorem{cor}[equation]{Corollary}

\newtheorem{lemma}[equation]{Lemma}
\newtheorem{remark}[equation]{Remark}

\newcommand{\pair}[1]{\langle #1\rangle}
\newcommand{\set}[1]{\left\{#1\right\}}

\newcommand{\pr}[1]{\left(#1\right)}

\newcommand{\bb}[1]{\mathbb{#1}} \newcommand{\td}[1]{\widetilde{#1}}

\newcommand{\tr}{\text{tr}}
\newcommand{\D}{\Delta}

\newcommand{\n}{\nabla}
\newcommand{\sbst}{\subseteq}
\newcommand{\h}{\textbf H}

\newcommand{\bd}{\partial}
\newcommand{\N}{\mathbf{n}}

\title{
A stable self-shrinking M\"{o}bius bundle in $\mathbb R^4$}

\author{Kahnrad Braxton}
\author{Tang-Kai Lee}
\author{Jonathan J. Zhu}

\address{Department of Mathematics, Johns Hopkins University, Baltimore, Maryland, USA}
\email{kbraxto6@jhu.edu}

\address{Department of Mathematics, Columbia University, New York, NY 10027, USA}
\email{leetk@math.columbia.edu,}

\address{Department of Mathematics, University of Washington, Seattle, WA, USA}
\email{jonozhu@uw.edu}

\date{\today}

\begin{document}
\begin{abstract}
We prove the linear stability of an embedded mean curvature flow shrinker in $\mathbb R^4$ with the topology of the M\"{o}bius bundle. This provides a non-flat, non-spherical stable shrinker in higher codimension, in sharp contrast with the classification of stable hypersurface shrinkers. 
Topologically, the M\"obius shrinker models the reversal of a real blow-up, suggesting that stable higher-codimension singularities may encode non-trivial topological operations under mean curvature flow.
\end{abstract}
\maketitle


\section{\bf Introduction}
\label{sec:intro}

The mean curvature flow deforms a submanifold in the direction of its mean curvature vector and provides a canonical way of simplifying its geometry. 
Even when the initial submanifold is smooth and compact, however, singularities typically develop in finite time. 
Understanding these singularities is therefore a central problem in the study of the mean curvature flow.
Shrinkers, which evolve by homothetic shrinking under the flow, arise naturally as models for such singularities.

The space of all shrinkers is known to be rather wild, and a complete classification of singularity models appears to be out of reach. 
A fundamental principle, originating in the work of Colding--Minicozzi~\cite{CM12}, is that one should instead try to understand the singularities which are stable, and hence cannot be removed by a small perturbation of the flow. 
This point of view has led to a successful theory of generic singularities for mean curvature flow in codimension one.
%
%

The situation in higher codimension is considerably less understood. 
In this paper we study an explicit two-dimensional Lagrangian shrinker in $\mathbb R^4$ constructed by Yng-Ing Lee and Mu-Tao Wang \cite{LW09}. 
Among other things, they constructed an embedded asymptotically conical surface whose underlying topology is the total space of the M\"obius line bundle, induced by the parametrization
\begin{equation}
\label{eq:mobius-defn-intro}
\pr{e^{2i\theta}\cosh(r), e^{-i\theta}\sqrt{2}\sinh(r)}\qquad \text{ for } \theta\in \bb S^1 \text{ and } r\in\bb R.
\end{equation}
We refer to this surface throughout the paper as \emph{the M\"obius shrinker}. Bernstein \cite{B26} suggested that this may be the simplest nonorientable shrinker in $\bb R^4$; see also Remark \ref{rmk:entropy}.
Our main result is the stability of this M\"obius shrinker in the sense of the Gaussian area functional modulo translations and dilation; that is, the $F$-stability as defined in \cite{CM12} (cf. \cite{AS13,ALW,LL15}).

\begin{thm}
	\label{thm:main}
	The M\"obius shrinker is $F$-stable.
\end{thm}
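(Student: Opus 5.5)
We plan to reduce $F$-stability to a spectral statement about the drift Jacobi operator
\[
L V = \Delta^\perp V - \tfrac12 \nabla^\perp_{x^T} V + \tfrac12 V + \sum_{i,j}\langle A(e_i,e_j),V\rangle A(e_i,e_j)
\]
acting on normal sections in the Gaussian-weighted space $L^2(e^{-|x|^2/4})$. Following \cite{CM12} and its higher-codimension versions \cite{AS13,ALW,LL15}, $F$-stability is equivalent to the following statement. The quadratic form $-\int \langle LV,V\rangle e^{-|x|^2/4}$ must be nonnegative on the orthogonal complement of the span of the translation modes $\mathbf H$-type fields $v^\perp$, $v\in\mathbb R^4$, and the dilation mode $\mathbf H$. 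Equivalently, $L$ has no eigenvalue beyond these eigenspaces, where $\mathbf H$ has eigenvalue $1$ and the $v^\perp$ have eigenvalue $\tfrac12$ in the convention $LV=-\lambda V$. Since the surface is Lagrangian, we will use $J$ to identify normal vectors with tangent vectors, hence with $1$-forms $\alpha = \omega(V,\cdot)$. Under this identification, $L$ should become a Hodge-type drift Laplacian on $1$-forms plus curvature/zeroth-order terms. Moreover, $\mathbf H$ corresponds to $d\beta$, the exact differential of the Lagrangian angle (up to the shrinker relation $\beta = $ const $+\,\tfrac12\cdot$ Liouville potential).

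Next we exploit the $\mathbb S^1$-symmetry $\theta\mapsto\theta+\phi$, which acts isometrically on the surface as $(e^{2i\phi},e^{-i\phi})$. We decompose normal sections in Fourier modes $e^{ik\theta}$ on the double cover $(\theta,r)\in \mathbb S^1\times\mathbb R$, keeping only modes compatible with the Möbius identification $(\theta,r)\sim(\theta+\pi,-r)$. In the orthonormal normal frame $\{J\partial_r/|\partial_r|,\ J\partial_\theta/|\partial_\theta|\}$, each Fourier mode reduces $L$ to a $2\times2$ system of ODE operators in $r$. These are of Sturm--Liouville type with explicit coefficients in $\cosh r,\sinh r$, computed from the induced metric $g=(\cosh^2 r+2\sinh^2r)\,dr^2+(4\cosh^2 r+2\sinh^2 r)\,d\theta^2$ (after checking cross terms vanish), the second fundamental form, and the weight $e^{-(\cosh^2r+2\sinh^2r)/4}$. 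We then identify explicitly which modes contain $\mathbf H$ and the $v^\perp$. Translations in $\mathbb C^2$ decompose into Fourier frequencies $\pm2$ (first factor) and $\pm1$ (second factor). $\mathbf H$ is $\theta$-invariant. The rotations of $\mathbb C^2$ commuting or not with the symmetry give further explicit Jacobi fields with eigenvalue $0$, which serve as comparison functions.

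The core step is proving that, for each Fourier mode $k$, the lowest eigenvalues of the reduced system do not exceed the known ones after removing the explicit modes. For $|k|$ large, the $k^2/|\partial_\theta|^2$ term dominates the $|A|^2$ term, since $|A|^2\lesssim 1/(1+r^2)$ decays on the cone while $|\partial_\theta|^{-2}$ is comparable. This gives a crude bound showing stability for all $|k|\ge k_0$ with an explicit small $k_0$. For the finitely many remaining modes, we use explicit positive (or sign-controlled) solutions: the rotational and translational Jacobi fields restricted to each mode provide supersolutions. A ground-state substitution $V=\phi W$ with $L\phi=-\lambda_\phi\phi$ converts the quadratic form into $\int\phi^2|\nabla W|^2$ plus nonnegative terms, up to the known eigenvalue. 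Where the $2\times2$ coupling prevents a scalar ground state, we will try to decouple using the Lagrangian structure: the Hodge decomposition of $1$-forms into exact and coexact parts, with exact parts reducing to a scalar drift Laplacian on functions (whose spectrum is controlled by the Gaussian-weighted Poincaré inequality, $\lambda\ge\tfrac12$ on mean-zero functions, together with a Bakry--Émery bound), and coexact parts handled by a dual scalar problem.

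We expect the main obstacle to be the low modes $k=0,\pm1,\pm2$, where the destabilizing eigenvalues actually live and where the coupling between the two normal components is strongest. There we must show that the second eigenvalue in each such mode, after the known eigenfunction is removed, is nonpositive in the required sense. Our first attempt will be an oscillation/nodal count argument for the ODE system, using that the known eigenfunction (e.g. $\mathbf H$ or $v^\perp$) has no zeros in the appropriate component and hence is the ground state in its symmetry class, combined with the parity under $r\mapsto -r$ imposed by the Möbius identification. If this fails, we would fall back on an explicit Hardy-type inequality with weight tuned to the asymptotic cone.
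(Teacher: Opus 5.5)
Your framework (Fourier reduction to $2\times2$ radial systems with the parity constraints from $(\theta,r)\sim(\theta+\pi,-r)$, plus ground-state substitutions by the explicit rotation and translation fields) is the paper's. However, the two steps that carry the proof are not supplied, and one of your proposed tools fails.

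\textbf{High modes.} The term you must beat is the non-decaying $-\frac12|V|^2$, not $|A|^2$. Both $|A|^2$ and $k^2|\partial_\theta|^{-2}=k^2/(2\omega)$ decay like $|x|^{-2}$, so comparing them says nothing about $-\frac12|V|^2$ on the end. Radial energy alone cannot absorb that term: constants in mode $0$ give the dilation. As stated, the crude bound therefore proves nothing. A correct version would need a weighted radial Poincar\'e/Hardy estimate on the end, glued to the angular term near the core. Even then, no lossy estimate can reach $m=3$, where $Q_3$ has the kernel $(\frac34\sinh 2r,\frac1{\sqrt2}\cosh 2r)$. The paper avoids this entirely. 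It proves $Q_3\ge0$ and $Q_4\ge0$ exactly by Lemma~\ref{lem:pos-eigenvector}; for $m=4$ it uses the positive solution $(\cosh 2r,\frac1{\sqrt2}\sinh 2r)$ of a system with reduced potential, i.e.\ a supersolution. It then uses $Q_m-Q_{m-1}\ge\int_0^\infty\frac{2m-4}{2\omega}(\alpha^2+\beta^2)\rho\,dr$ for $m\ge3$.

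\textbf{Low modes $m\le2$.} This is the more serious gap. Here you need a \emph{second}-eigenvalue bound for a coupled system, and neither of your mechanisms gives one. Substituting by the translation field (your ground state) only yields $Q_m\ge-\frac12\int_0^\infty(\alpha^2+\beta^2)\rho\,dr$ and never uses the orthogonality condition. Sturm oscillation/nodal counting has no direct analogue for $2\times2$ systems.

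The missing idea is the paper's reduction to a scalar problem. Write $\alpha=\tilde\alpha a$, $\beta=\tilde\beta b$, and note $(\tilde\alpha^2+\tilde\beta^2)/\omega=1$. The variance identity $kx^2+\ell y^2=k\ell(x-y)^2+(kx+\ell y)^2$ then rewrites $Q_m$ in terms of the weighted mean $h=(\tilde\alpha\alpha+\tilde\beta\beta)/\omega$ and the difference $d=a-b$. Three things follow:
\begin{itemize}
\item orthogonality to the translation becomes exactly $\int_0^\infty h\,\omega\rho\,dr=0$;
\item the $d$-terms become nonnegative squares after completing the square in $(h',d)$ and a further ground-state substitution in $d$;
\item what remains is a \emph{scalar} weighted Sturm--Liouville inequality for $h$.
\end{itemize}
Nodal counting does apply to that scalar problem, with explicit second eigenfunctions: $h_1=\frac{\cosh r}{2\omega}(3\sinh^2r-2)$ (the $h$ of the rotation field) for $m=1$, and $h_2=\cosh 2r-3$ for $m=2$. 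For the $\theta$-part of $m=0$ the paper has no explicit eigenfunction to compare with and instead uses a Helffer/Brascamp--Lieb duality argument.

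\textbf{Hodge fallback.} This does not work here, for three reasons.
\begin{itemize}
\item $M$ is not zero-Maslov: $\mathbf H=J\nabla\theta$ with $\theta$ defined only mod $\pi$, so $\mathbf H$ is closed but not exact.
\item On Hamiltonian variations the translations correspond to the coordinate functions, which already sit at eigenvalue $\frac12$ of the drift Laplacian. What you need is that there is no further spectrum in $(0,1)$, a threshold of $1$, not $\frac12$. The paper obtains this spectral gap only as a corollary of the theorem.
\item Bakry--\'Emery is unavailable. On a shrinker, $\mathrm{Ric}+\nabla^2\frac{|x|^2}{4}=\frac12g-\sum_i\langle A(\cdot,e_i),A(\cdot,e_i)\rangle$, which equals $-\frac34$ in the unit $\theta$-direction at $r=0$.
\end{itemize}
Also, $g_{rr}=\sinh^2r+2\cosh^2r$, not $\cosh^2r+2\sinh^2r$.
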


Theorem~\ref{thm:main} is in stark contrast with the codimension one picture. 
There, apart from the flat shrinker, $F$-stability forces a complete embedded shrinker to be the round sphere.
The M\"obius shrinker shows that this rigidity breaks down already for surfaces in $\mathbb R^4.$
Theorem~\ref{thm:main} suggests that the M\"obius shrinker should be regarded as a natural candidate for a generic singularity model in higher codimension mean curvature flow. 
The $F$-stability by itself does not establish such a genericity statement. 
Nevertheless, the result points to a phenomenon that has no direct analogue in the hypersurface generic-singularity picture. 
Generic higher-codimension flows may be able to perform nontrivial topological operations which are invisible in codimension one.

There is a particularly simple topological interpretation of this possibility. 
The total space of the M\"obius line bundle can be identified with the real blow-up
$\text{Bl}^{\mathbb R}_0\mathbb R^2$
of the plane at the origin; that is, the origin is replaced by the exceptional divisor $\bb{RP}^1$. 
Correspondingly, blowing up a point on a closed surface in the real category amounts topologically to taking the connected sum with $\bb{RP}^2$. 
From this perspective, a mean curvature flow developing a M\"obius-shrinker singularity should be thought of as \emph{undoing a real blow-up}, or equivalently, removing an $\bb{RP}^2$ summand. 
Theorem~\ref{thm:main} suggests that such an operation is not ruled out by linear stability and may therefore be a robust topological change available to higher codimension mean curvature flow. 
This is quite different from the codimension-one program, where generic singularities are expected to be governed by the much more rigid spherical and cylindrical models \cite{CM12,CCMS,SX1,SX2,CCS, BK23}.
Note that by \cite{LZ-tangent}, at such a singular time-slice, the tangent cone at the singularity is the Schoen--Wolfson $(2,1)$-cone, and understanding topological changes through such a M\"obius singularity will contribute to the program in high codimensions.
For the case of almost calibrated Lagrangian mean curvature flow, see, for example, \cite{TY02, J15, L25}, where Lawlor necks and some expanding solutions \cite{JLT} are expected to be models for a surgery process resolving a neckpinch singularity; see Remark~\ref{rmk:Lawlor}.




A natural next question is whether the M\"obius shrinker \eqref{eq:mobius-defn-intro} actually occurs as a singularity model of a mean curvature flow starting from a compact submanifold. 
In the present setting, one may ask more specifically whether there exists a compact surface in $\mathbb R^4$ whose flow develops a singularity modeled on the M\"obius shrinker. 
Results constructing compact flows with prescribed asymptotically conical singularity models in codimension one \cite{LZ-closed} suggest that such a realization problem is approachable. 
A particularly appealing possibility is that within a suitable class of initial data, a flow starting from a Klein bottle develops such a singularity and thereby removes one of its $\bb{RP}^2$ factors.
Understanding whether this behavior occurs, and whether it is stable under perturbations of the initial surface, would provide a nonlinear counterpart to Theorem~\ref{thm:main}.

There is a parallel story in Ricci flow. 
Feldman--Ilmanen--Knopf constructed a four-dimensional shrinking K\"ahler--Ricci soliton on the total space of $\mathcal O_{\mathbb{CP}^1}(-1),$ which is the complex blow-up~$\text{Bl}^{\mathbb C}_0\mathbb C^2$ of $\mathbb C^2$ at the origin \cite{FIK03}. 
This shrinker models the contraction of the exceptional $\mathbb{CP}^1$ and hence the operation of undoing a complex blow-up. 
Recently, Naff--Ozuch \cite{NO25} proved that the four-dimensional FIK blow-down shrinker is linearly stable \cite{NO25}. 
Their result suggests that undoing complex blow-ups should occur as a robust topological operation in four-dimensional Ricci flow, as Theorem~\ref{thm:main} does for the real blow-down described above. 
The FIK shrinker is carried by the complex blow-up $\text{Bl}^{\mathbb C}_0\mathbb C^2,$
whereas the M\"obius shrinker is carried by the real blow-up $\text{Bl}^{\mathbb R}_0\mathbb R^2.$
Thus, the M\"obius shrinker may be viewed as a mean curvature flow counterpart of the FIK blowdown shrinker. 
In both cases, a stable noncompact shrinking soliton encodes the reversal of a basic blow-up operation. 
This analogy was also one of the motivations for the present work, together with the appearance of real blow-ups; see Remark~\ref{rmk:LNZ-construction}.
It raises the broader question of whether higher codimension mean curvature flow admits a topological surgery picture in which stable noncompact shrinkers describe elementary operations in much the same way that shrinking Ricci solitons are expected to do in Ricci flow.

There exist minimal surfaces and free-boundary minimal surfaces whose topology is related to the M\"obius bundle.
See \cite{PGAM} and \cite{MM25}.


We briefly describe the proof ideas.
The $F$-stability means the shrinker is stable for the Gaussian area functional modulo translations and a dilation, which give explicit unstable directions listed in Table~\ref{tab:evectors}.
The second variation of the Gaussian functional gives rise to an index form, which is computed in Section~\ref{sec:index-form}. 
Using the Fourier basis fixed in Section~\ref{sec:prelim}, we decompose this index form into a family of one-dimensional quadratic forms so that the stability problem reduces to estimating each Fourier mode, indexed by $m\in\bb N_0$, separately, which is carried out in the following sections.

Sections \ref{sec:34-mode} estimates the index form for $m=3$ and $4.$
The $m=3$ mode provides the model for much of the argument, in which an explicit Jacobi field allows us to make a ground-state-type transformation, rewriting the index form in terms of the ratio between an arbitrary normal field and the Jacobi field. 
This immediately gives non-negativity, together with a precise characterization of the equality case. 
The same idea makes the modes $m\ge4$ comparatively straightforward; moreover, the positive angular contribution becomes stronger as $m$ increases, so the higher modes follow from the estimates at the borderline modes.

The modes $m\le 2$ are more delicate because the geometric unstable directions begin to occur.
In Section~\ref{sec:12-mode}, we first identify these directions explicitly and separate them from the remaining variations using the orthogonality conditions appearing in the definition of $F$-stability. 
After suitable changes of variables, the resulting quadratic forms are reduced to weighted one-dimensional estimates of Sturm--Liouville type, together with explicit non-negative remainder terms. 
These estimates show that, once the translation and dilation directions are removed, every low Fourier mode is non-negative, and the rigidity statements in the individual mode estimates then identify all possible zero directions.
Section~\ref{sec:0-mode} deals with the lowest mode, which uses a Sturm--Liouville argument and other techniques.

We complete the proof of $F$-stability in Section~\ref{sec:final}. 
Beyond Theorem~\ref{thm:main}, the mode analysis also yields a complete classification of the unstable modes and Jacobi fields, which turn out to be precisely those arising from the explicit geometric variations listed in Table~\ref{tab:evectors}. 
We conclude with several remarks, including possible directions for further investigation.

\subsection*{Acknowledgements}
The authors thank Jacob Bernstein, Yng-Ing Lee, and Ao Sun for helpful suggestions.
KB would like to thank his advisor, Jacob Bernstein, for introducing him to this problem and for his continued guidance and valuable insights into the methods and background underlying it. 
TKL and JJZ thank Keaton Naff and Jingze Zhu for many inspiring discussions and suggestions, without which this project would not have taken its present form.
Part of the project was initiated when TKL was with Keaton Naff and Jingze Zhu at an event commemorating the life and work of
Richard Hamilton, and he thanks Simon Brendle for organizing the event.


\subsection*{AI declaration}
After obtaining the results in Sections \ref{sec:12-mode} and \ref{sec:mode-theta}, ChatGPT was used to check the validity of some routine arguments and to suggest certain simplifications of the existing arguments, precisely, the additional substitutions in \eqref{d0-m=1} and \eqref{m-2-sug}, and the Brascamp--Lieb approach in Proposition \ref{prop:mode-0-theta-analysis}. 
AI was not involved in any other mathematical analysis in this article. 
This article does not contain LLM-generated text.

\section{\bf Preliminaries}
\label{sec:prelim}

\subsection{M\"obius shrinker}

In \cite{LW09}, Y.-I. Lee and M.-T. Wang constructed a family of Lagrangian shrinkers that are asymptotic to the Schoen--Wolfson $(p,q)$-cones $C_{p,q}$ for any coprime $p$ and $q.$
We work on the case when $p=2$ and $q=1,$ which gives rise to the shrinker $\td X\colon \bb S^1\times \bb R\to\bb R^4\simeq \bb C^2$ defined by\footnote{The one constructed in \cite{LW09}*{Section 2.2} has an additional $i$ factor in the second complex component.}
\begin{align}\label{eq:F-cyl-parametrization}
\td X(\theta,r)=\pr{e^{2i\theta}\cosh(r), e^{-i\theta}\sqrt{2}\sinh(r)}.
\end{align}
The symmetry $\td X(\theta+\pi,-r)=\td X(\theta,r)$ implies that $\td X$ descends to a map 
\begin{align}\label{eq:M-embedding}
X\colon M\to\bb R^4
\end{align}
where $M=\pr{\bb S^1\times \bb R}/\sim$ and the equivalence relation is defined by $(\theta+\pi,-r)\sim(\theta,r).$
Thus, $M$ is a M\"obius bundle and it is straightforward to check that $X$ is an embedding of $M$ into $\mathbb{R}^4$. 
In the rest of the note, we will call this embedding or its image (also identified with $M$) the M\"obius shrinker.

\subsubsection{Geometry of the M\"obius shrinker}
\label{sec:shrinker-geometry}
We work with the parametrization $X\colon M\to\bb R^4$ given as above by $X(\theta,r)=(e^{2i\theta}\cosh(r), e^{-i\theta}\sqrt{2}\sinh(r)),$ on which there are locally defined tangent vectors 
\begin{align*}
X_\theta&= \pr{2i e^{2i\theta}\cosh(r), -\sqrt 2 i e^{-i\theta}\sinh(r)}\,\, \text{ and }\,\,
X_r= \pr{e^{2i\theta}\sinh(r), \sqrt 2 e^{-i\theta}\cosh(r)}.
\end{align*}
The Lagrangian condition allows us to obtain two locally defined normal vectors
\begin{align*}
\N_1 = JX_\theta = (-2 e^{2i\theta} \cosh(r), \sqrt 2 e^{-i\theta}\sinh(r)) 
\,\,\text{ and }\,\,
\N_2=JX_r = (ie^{2i\theta}\sinh(r), \sqrt 2 ie^{-i\theta}\cosh(r))
\end{align*}
which satisfy
\begin{align}\label{eq:n-boundary}
\N_1(\theta+\pi, -r)
= \N_1(\theta, r)
\,\,\text{ and }\,\,
\N_2(\theta+\pi, -r)
= -\N_2(\theta, r)
\end{align}
for any $(\theta,r)\in \bb S^1\times\bb R.$
Straightforward calculations lead to 
\begin{align*}
g := X^* g_{\rm euc}
= 2\omega(r)\,d\theta\otimes d\theta
+ \omega(r)\, dr\otimes dr
\end{align*}
where $\omega(r) := g_{rr}
= \sinh^2r+2\cosh^2r,$
and 
\begin{align*}
A_{\theta\theta}
= \frac 1{\omega(r)} \pr{4\cosh^2(r) - \sinh^2(r)}\N_1,\,\,
A_{rr}
= \frac {-1}{\omega(r)} \, \N_1,\,\,\text{ and }\,\,
A_{\theta r}
= \frac {-2}{\omega(r)}\, \N_2
\end{align*}
where $A(Y,Z)=\n^\perp_Y Z$ is the second fundamental form of $M$ in $\bb R^4.$
Combining these implies that the mean curvature vector of $M$ is 
\begin{align*}
\h = \tr_g A
= -\frac 12 X^\perp
& = \frac 1{2 \omega} \N_1.
\end{align*}
In particular, the mean curvature is non-vanishing.

As observed in \cite{LW09}, the shrinker $M$ is an asymptotically conical shrinker.
The link of its asymptotic cone is the curve
\begin{align*}
\frac 1{\sqrt 3} \set{\pr{e^{2it}, \sqrt 2 e^{it}}: t\in[0,2\pi]}
\sbst \bb S^3.
\end{align*}
The Lagrangian angle of $M$ can locally be represented by $\theta$ and hence it is a Hamiltonian stationary shrinker.
Note that the M\"obius shrinker is not a zero-Maslov class Lagrangian.

\subsubsection{Normal vector fields and Fourier expansions}
\label{sec:normal-fourier}

The stability problem is about deformations of the shrinker, on which normal vector fields provide the infinitesimal information.
Let $U$ be a smooth normal vector field on the shrinker $M,$ i.e., a section of the normal bundle $NM.$
Following the notations in Section~\ref{sec:shrinker-geometry}, we can find smooth functions $u$ and $v$ on $M$ such that
\begin{align*}
U = \frac{u}{g_{\theta\theta}}\N_1 + \frac{v}{g_{rr}}\N_2
= \frac{u}{g_{\theta\theta}} JX_\theta + \frac{v}{g_{rr}} JX_r.
\end{align*}
By the construction of $M$ and the boundary conditions \eqref{eq:n-boundary}, the functions $u$ and $v$ have to satisfy
\begin{align}
\label{eq:u-v-boundary}
u(\theta,r) = u(\theta+\pi, -r)
\,\,\text{ and }\,\,
v(\theta,r) = -v(\theta+\pi, -r)
\end{align}
so that it is a globally deined expression of a normal vector field.
Each of $u$ and $v$ can be expanded by their Fourier modes as
\begin{equation}\label{eq:uv-expansion}
\begin{split}
u&= \sum_{m=0}^\infty \pr{\sqrt{2}\cos(m\theta)\alpha^1_m(r)+\sqrt{2}\sin(m\theta)\alpha^2_m(r)}
\,\,\text{ and}\\
v&= \sum_{m=0}^\infty \pr{\cos(m\theta)\beta^1_m(r)+\sin(m\theta)\beta^2_m(r)}
\end{split}
\end{equation}
for some smooth functions $\alpha^j_m$'s and $\beta^j_m$'s on $\bb R.$
The boundary conditions \eqref{eq:u-v-boundary} imply that 
\begin{align}
\label{eq:m-even-parity}
\text{when }m\text{ is even},
\alpha^1_m\text{ and }\alpha^2_m
\text{ are even and }
\beta^1_m\text{ and }\beta^2_m
\text{ are odd}
\end{align}
and that
\begin{align}
\label{eq:m-odd-parity}
\text{when }m\text{ is odd},
\alpha^1_m\text{ and }\alpha^2_m
\text{ are odd and }
\beta^1_m\text{ and }\beta^2_m
\text{ are even}.
\end{align}

Special normal vector fields correspond to certain unstable direction that will be ruled out in the $F$-stability that we will introduce next.
For that purpose, we list all the (weakly) unstable directions that can be obtained in a straightforward way.
The following table indicates the non-zero modes of each unstable deformation, where we view $\bb C^2\simeq \bb R^4$ in the canonical way and let $V_{ij} = x_i e_j - x_j e_i$ be the generator of rotation in the $x_ix_j$-plane. 


\begin{table}[H]
    \centering
    \begin{tabular}{c|c|c|c}
        Transformation & Generator & Mode $m$ & Nonzero Fourier coefficients  \\
        \hline
        Dilation & $x^\perp$ & 0 & $\alpha_0^1 = -\sqrt{2}$ \\ 
        Rotation & $2V_{12}^\perp =  V_{34}^\perp$ & 0 & $\beta^1_0 = \sinh(2r)$\\

        Translation & $e_3^\perp$ & 1 & $\alpha^1_1=\sinh(r), \beta^2_1 = \sqrt{2}\cosh(r)$ \\
        Translation & $e_4^\perp$ & 1 & $\alpha^2_1 = -\sinh(r), \beta^1_1 = \sqrt{2}\cosh(r)$\\
        Rotation & $\frac{1}{2}(V_{13}^\perp - V_{24}^\perp)$ & 1 & $\alpha^1_1 = \frac{3}{4}\sinh(2r), \beta^2_1 =-\frac{\sqrt{2}}{2}$\\
        Rotation & $\frac{1}{2}(V_{14}^\perp + V_{23}^\perp)$ & 1 & $\alpha^2_1 = \frac{3}{4}\sinh(2r), \beta^1_1 =\frac{\sqrt{2}}{2}$\\
        
        Translation & $e_1^\perp$ & 2 & $\alpha^1_2 = -\sqrt{2}\cosh(r), \beta^2_2 = -\sinh(r)$ \\
        Translation & $e_2^\perp$ & 2 & $\alpha^2_2 = -\sqrt{2}\cosh(r), \beta^1_2 = \sinh(r)$; \\

        Rotation & $\frac{1}{2}(V_{13}^\perp + V_{24}^\perp)$ & 3 & $\alpha^1_3 = \frac{3}{4}\sinh(2r), \beta^2_3 =\frac{\sqrt{2}}{2}\cosh(2r)$\\
        Rotation & $\frac{1}{2}(-V_{14}^\perp + V_{23}^\perp)$ & 3 & $\alpha^2_3 = \frac{3}{4}\sinh(2r), \beta^1_3 =-\frac{\sqrt{2}}{2}\cosh(2r)$\\
    \end{tabular}\vspace{6pt}
    \caption{Weakly unstable directions for $M.$}
    \label{tab:evectors}
\end{table}

Note that all translations and rotations are accounted for, as from the circle symmetry of the M\"{o}bius shrinker $M,$ we have $2V_{12}^\perp = V_{34}^\perp$. 

\subsection{$F$-stability}

In this section, we recall the Colding--Minicozzi theory on stability properties of shrinkers \cite{CM12}, whose high codimensional version was studied in \cite{AS13,ALW,LL15}.
Given an $n$-dimensional submanifold $\Sigma$ in $\bb R^N,$ its $F$-functional or Gaussian area is defined to be 
\begin{align*}
F(\Sigma):= \frac 1{(4\pi)^{n/2}} \int_\Sigma e^{-|x|^2/4} dx,
\end{align*}
and more generally, for any $x_0\in\bb R^N$ and $t_0>0,$ 
\begin{align*}
F_{x_0,t_0}(\Sigma)
:= \frac 1{(4\pi t_0)^{n/2}} \int_\Sigma e^{-|x-x_0|^2/(4t_0)} dx.
\end{align*}
Note that $\Sigma$ is a shrinker, or $\Sigma$ satisfies $\h=-x^\perp/2,$ if and only if $\Sigma$ is a critical point of the functional $F=F_{0,1}.$

\begin{definition}
    Let $\Sigma$ be an $n$-dimensional shrinker in $\bb R^N.$
    We say $\Sigma$ is {\bf $F$-stable} if for any compact variation $\Sigma_s$ of $\Sigma,$ there exist variations $x_s$ of $0$ and $t_s$ of $1$ such that $\bd_s^2 F_{x_s,t_s}(\Sigma_s)|_{s=0} \ge 0.$
\end{definition}

\begin{remark}
\label{rmk:weighted-spaces}
    If $\Sigma$ is a shrinker, by the calculations in \cite{CM12} (cf. \cite{AS13,ALW,LL15}), for a compact variation $\Sigma_s$ of $\Sigma$ given by a normal vector $V,$ if we consider the index form 
    \[\pr{4\pi}^{n/2} Q(V):= \int_\Sigma \left(|\nabla^\perp V|^2
    - g^{ik}g^{j\ell} \pair{A_{ij}, V}\langle A_{k\ell},V\rangle
    -\frac{1}{2}|V|^2 \right) e^{-|x|^2/4}\,dx, \]
    then for any variation $x_s$ of $0$ with $\bd_sx_s|_{s=0}=y$ and $t_s$ of $1$ with $\bd_st_s|_{s=0}=\tau,$ it follows that
    \begin{align*}
    \bd_s^2 F_{x_s,t_s}(\Sigma_s)|_{s=0}
    = Q(V)
    + \frac 1{(4\pi)^{n/2}}
    \int_\Sigma \pr{
    - 2\tau\pair{V,\h} 
    - \tau^2|\h|^2
    + \pair{V,y}
    - \frac 12|y^\perp|^2
    }e^{-|x|^2/4}\,dx.
    \end{align*}
    By \cite{LL15}, it then follows that $\Sigma$ is $F$-stable if and only if $Q(V)\ge 0$ for any normal $V$ with both $V$ and $\n^\perp V$ of polynomial growth such that $V$ is $L^2$-orthogonal to $\h$ and the all translation directions.
    When analyzing the form $Q,$ we will work with normal vector fields that are $H^1$ with respect to the Gaussian weight, which will form a slightly larger space.
    As later we will work on different Fourier modes, we consider 
    \begin{align}\label{eq:def-HW}
    H^1_W(\mathbb{R}) = \set{f \in L^2_\rho(\mathbb{R}) : f' \in L^2_{\rho/\omega}(\mathbb{R}) }
    \end{align}
    where $\rho = e^{-|F|^2/4} = \exp(-(3\cosh(2r)-1)/8)$ is the Gaussian weight on $M$ and $L^2_h(\bb R)$ is the space of function on $\bb R$ that is $L^2$ with respect to $h\,dx.$
    Integration by parts make sense when taking weighted integrals of such functions by the standard cut-off argument and the dominated convergence theorem.
\end{remark}

A closely related notion is the entropy-stability \cite{CM12}, which considers all possible deformations (instead of just compactly supported ones) and is potentially stronger than the $F$-stability for a shrinker that does not split off a line isometrically.
Theorem~\ref{thm:main} implies that the entropy of the M\"obius shrinker $M$ can not decrease if one deforms $M$ by a weighted integrable variation. 
See Remark~\ref{rmk:entropy} for more on entropy.

\section{\bf Calculating index form}
\label{sec:index-form}

In this section, we decompose the index form on the M\"{o}bius shrinker $M$ according to the Fourier decomposition of normal sections as in Section \ref{sec:normal-fourier}. 
We first substitute a general normal section into the index form.
Recall that we define 
\begin{align}\label{convention-omega-rho}
\omega = g_{rr} = \frac{3\cosh(2r)+1}{2}
\,\,\text{ and }\,\,
\rho = e^{-|F|^2/4} = \exp(-(3\cosh(2r)-1)/8).
\end{align}
The volume element is $\sqrt{2}\omega\, d\theta\, dr$.

\begin{lemma}
    \label{lem:index-form-mobius}
    Let $M$ be the M\"{o}bius shrinker and $U = \frac{u}{g_{\theta\theta}}\N_1 + \frac{v}{g_{rr}}\N_2$. The index form is given by
    \[
    \begin{split}
    4\pi Q(U)= \int_M \Bigg(& 
    \omega^{-2}\pr{\frac{1}{4}u_\theta^2+\frac{1}{2}u_r^2 + \frac{1}{2}v_\theta^2+v_r^2}
    \\& -\frac{3\sinh(2r)}{2\omega^3}(u(u_r+v_\theta) +v(2v_r-u_\theta))
    \\& +\frac{1}{16\omega^4}(-7+3\cosh(2r))(11+9\cosh(2r))u^2   -\frac{1}{4\omega}u^2
    \\& + \frac{1}{4\omega^4}(-25+9\cosh(4r))v^2 -\frac{1}{2\omega}v^2
    \Bigg)\rho\,dx.
    \end{split}
    \]
\end{lemma}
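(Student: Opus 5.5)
The plan is to compute each of the three terms in the integrand of $Q$ from Remark~\ref{rmk:weighted-spaces} directly in the local frame $\{\N_1,\N_2\}$, using the geometry recorded in Section~\ref{sec:shrinker-geometry}.

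\textbf{Normal frame.} The frame is orthogonal, with $|\N_1|^2=g_{\theta\theta}=2\omega$ and $|\N_2|^2=g_{rr}=\omega$, because $J$ is an isometry and $\langle X_\theta,X_r\rangle=0$. Writing $U=a\N_1+b\N_2$ with $a=u/(2\omega)$ and $b=v/\omega$, we get immediately
\[|U|^2=2\omega a^2+\omega b^2=\frac{u^2}{2\omega}+\frac{v^2}{\omega}.\]
This accounts for the terms $-\tfrac14\omega^{-1}u^2$ and $-\tfrac12\omega^{-1}v^2$ after multiplying by $-\tfrac12$.

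\textbf{Normal connection.} Since $M$ is Lagrangian, $\n^\perp_Y(JZ)=J(\n^M_Y Z)$, so the normal connection in the frame $J X_\theta, JX_r$ is governed by the Christoffel symbols of the conformal metric $g=\omega(2\,d\theta^2+dr^2)$. Because $\omega$ depends only on $r$, the symbols are
\[\Gamma^\theta_{\theta r}=\Gamma^r_{rr}=\frac{\omega'}{2\omega},\qquad \Gamma^r_{\theta\theta}=-\frac{\omega'}{\omega},\qquad \Gamma^\theta_{rr}=\Gamma^r_{\theta r}=\Gamma^\theta_{\theta\theta}=0,\]
with $\omega'=\tfrac32\sinh(2r)$. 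From these I will write $\n^\perp_\theta U$ and $\n^\perp_r U$ in the frame and compute
\[|\n^\perp U|^2=g^{\theta\theta}|\n^\perp_\theta U|^2+g^{rr}|\n^\perp_r U|^2.\]

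\textbf{Second fundamental form.} For the term $g^{ik}g^{j\ell}\langle A_{ij},U\rangle\langle A_{k\ell},U\rangle$, I will use the components of $A$ listed in Section~\ref{sec:shrinker-geometry}. With $\langle A_{\theta\theta},U\rangle=(4\cosh^2r-\sinh^2r)u/\omega$ and similar expressions for the other components, this equals
\[\frac{1}{4\omega^2}\langle A_{\theta\theta},U\rangle^2+\frac{1}{\omega^2}\langle A_{rr},U\rangle^2+\frac{2}{2\omega^2}\langle A_{\theta r},U\rangle^2,\]
which is a quadratic form in $u$ and $v$ with no derivatives.

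\textbf{Collecting terms.} When expanding $|\n^\perp U|^2$ with $a=u/(2\omega)$ and $b=v/\omega$, derivatives of $\omega^{-1}$ generate cross terms proportional to $\omega'/\omega^3$ times $u u_r$, $v v_r$, $u v_\theta$ and $v u_\theta$. Together with the Christoffel terms, these should assemble into
\[-\frac{3\sinh(2r)}{2\omega^3}\big(u(u_r+v_\theta)+v(2v_r-u_\theta)\big).\]
The remaining zeroth-order pieces come from $(\omega')^2/\omega^4$ contributions together with the $|A|^2$-type term. The last step is to rewrite these zeroth-order coefficients, using $\cosh^2r=(\cosh 2r+1)/2$ and $\sinh^2r=(\cosh2r-1)/2$, into the stated forms
\[\frac{(-7+3\cosh2r)(11+9\cosh2r)}{16\omega^4}\quad\text{and}\quad \frac{-25+9\cosh4r}{4\omega^4}.\]

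\textbf{Main obstacle.} The hard part is this bookkeeping: making sure the mixed $\theta$ terms $uv_\theta$ and $vu_\theta$ carry the right signs, and that the many zeroth-order contributions combine exactly into the stated polynomials in $\cosh 2r$. To guard against sign errors in the frame derivatives, I will check the result against Table~\ref{tab:evectors}. For example, the rotation field $\beta_0^1=\sinh 2r$ should be a Jacobi field, so the corresponding Euler–Lagrange equation must vanish on it.
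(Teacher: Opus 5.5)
Your route is the same as the paper's: compute $|U|^2$, $\nabla^\perp U$ in the frame $\{\N_1,\N_2\}$, and the $A$-term, then collect. Your way of obtaining the normal connection, from $\nabla^\perp_Y(JZ)=J\nabla_YZ$ and the Christoffel symbols of $g=\omega(2\,d\theta^2+dr^2)$, is a tidy substitute for the paper's direct projection of $DU$. Your Christoffel symbols, $|U|^2$, and the expression for the $A$-term are all correct.

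There is, however, a concrete error that stops the computation from producing the stated formula. Since $\omega=\sinh^2r+2\cosh^2r=\tfrac12(3\cosh 2r+1)$, one has $\omega'=6\sinh r\cosh r=3\sinh(2r)$, not $\tfrac32\sinh(2r)$. Carrying your frame computation through gives
\[
\nabla^\perp_\theta U=\big(u_\theta+\tfrac{\omega'}{\omega}v\big)\frac{\N_1}{2\omega}+\big(v_\theta-\tfrac{\omega'}{2\omega}u\big)\frac{\N_2}{\omega},\qquad
\nabla^\perp_r U=\big(u_r-\tfrac{\omega'}{2\omega}u\big)\frac{\N_1}{2\omega}+\big(v_r-\tfrac{\omega'}{2\omega}v\big)\frac{\N_2}{\omega},
\]
and hence
\[
|\nabla^\perp U|^2=\omega^{-2}\big(\tfrac14u_\theta^2+\tfrac12u_r^2+\tfrac12v_\theta^2+v_r^2\big)-\frac{\omega'}{2\omega^3}\big(u(u_r+v_\theta)+v(2v_r-u_\theta)\big)+\frac{(\omega')^2}{4\omega^4}(u^2+2v^2).
\]
With $\omega'=3\sinh(2r)$ this gives exactly the stated cross term. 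Combining $\frac{9\sinh^2(2r)}{4\omega^4}(u^2+2v^2)$ with the $A$-contributions $-\frac{(3\cosh2r+5)^2}{16\omega^4}u^2-\frac{1}{\omega^4}u^2-\frac{4}{\omega^4}v^2$ yields $\frac{(3\cosh2r-7)(9\cosh2r+11)}{16\omega^4}u^2$ and $\frac{9\cosh4r-25}{4\omega^4}v^2$, as claimed.

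With your value $\omega'=\tfrac32\sinh(2r)$, the cross term would instead carry the coefficient $-\frac{3\sinh(2r)}{4\omega^3}$, and the $u^2$ coefficient would be $-\frac{30\cosh2r+50}{16\omega^4}$. So the collection step fails as written. Your proposed Jacobi-field check with $\sinh 2r$ is valid and would expose this.

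A minor point: $\partial_\theta\omega=0$, so the $uv_\theta$ and $vu_\theta$ terms come solely from $\Gamma^r_{\theta\theta}$ and $\Gamma^\theta_{\theta r}$, not from differentiating $\omega^{-1}$.
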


\begin{proof}
Recall that
\[4\pi Q(U)= \int_M \left(|\nabla^\perp U|^2
- g^{ik}g^{j\ell} \pair{A_{ij}, U}\langle A_{k\ell},U\rangle
-\frac{1}{2}|U|^2 \right) \rho\, dx.\]
Recall also that $|\N_1|^2 = g_{\theta\theta}=2\omega$ and $|\N_2|^2=g_{rr}=\omega$, so
\[\frac{1}{2}|U|^2=\frac{u^2}{4\omega} + \frac{v^2}{2\omega}.\]
We may compute $\nabla^\perp_U$ by projecting $DU$ to the normal bundle. Namely, we find that
\[
    \nabla^\perp_\theta U = \left(u_\theta + \frac{3v}{\omega}\sinh(2r)\right)\frac{\N_1}{2\omega}+ \left(v_\theta -\frac{3u}{2\omega}\sinh(2r) \right)\frac{\N_2}{\omega},
\]
\[
    \nabla^\perp_r U = \left(u_r - \frac{3u}{2\omega}\sinh(2r)\right)\frac{\N_1}{2\omega}+ \left(v_r -\frac{3v}{2\omega}\sinh(2r)\right)\frac{\N_2}{\omega}.
\]
It follows that 
\[\begin{split}
    |\nabla^\perp U|^2 &= g^{\theta\theta}|\nabla^\perp_\theta U|^2 + g^{rr}|\nabla^\perp_r U|^2
    \\&= \frac{1}{\omega^2}\left(\frac{1}{4} u_\theta^2  + \frac{3}{2\omega}\sinh(2r) u_\theta v +\frac{9}{4\omega^2}\sinh^2(2r)v^2 + \frac{1}{2} u_r^2 - \frac{3}{2\omega}\sinh(2r) u_r u +\frac{9}{8\omega^2}\sinh^2(2r)u^2   \right)
    \\& \qquad + \frac{1}{\omega^2}\left( \frac{1}{2}v_\theta^2  - \frac{3}{2\omega}\sinh(2r)uv_\theta + \frac{9}{8\omega^2}\sinh^2(2r)u^2 + v_r^2 -\frac{3}{\omega}\sinh(2r)v v_r  + \frac{9}{4}\sinh^2(2r)v^2 \right)
\end{split}
\]
Finally, the curvature term is 
\[
\begin{split}
  -g^{ik}g^{j\ell} \pair{A_{ij}, U}\pair{A_{k\ell},U}
  &= g^{\theta\theta}g^{\theta\theta} \pair{A_{\theta\theta}, U}^2 + 2g^{\theta\theta}g^{rr} \pair{A_{\theta r}, U}^2 + g^{rr}g^{rr} \pair{A_{rr}, U}^2
  \\&=  -\frac{1}{\omega^2}\left(  (5+3\cosh(2r))^2\frac{u^2}{16\omega^2} +  4\frac{v^2}{\omega^2} + \frac{u^2}{\omega^2} \right).
\end{split}
\]
Collecting terms yields the claimed formula for $Q(U)$. 
\end{proof}

We now expand the above in our Fourier basis as in \eqref{eq:uv-expansion} for some smooth functions $\alpha^j_m$ and $\beta^j_m$ on $\bb R.$
Substituting and integrating in $\theta$ leads to the following simplified form.

\begin{prop}
\label{prop:form-fourier}
For $U = \frac{u}{g_{\theta\theta}}\N_1 + \frac{v}{g_{rr}}\N_2$ with the decomposition in \eqref{eq:uv-expansion},
we have the splitting 
\[2\sqrt{2} Q(U)= Q_0^\theta(\alpha_0) + Q_0^r(\beta_0)+\sum_{m=1}^\infty \left( Q_m(\alpha^1_m,\beta^2_m) + Q_m(\alpha^2_m,-\beta^1_m) \right), \]
where the quadratic forms\footnote{We emphasize that $Q_m$ is a quadratic form acting on one element $(\alpha,\beta).$}
$Q_m$'s are defined by
\begin{equation}
\label{eq:Q-m}
\begin{split}
    Q_m(\alpha,\beta)
    = \int_0^\infty
    \Bigg[
    &\frac{(\alpha')^2+(\beta')^2}{\omega}
    +\left(-1+\frac{m^2}{2\omega}\right)\alpha^2\\
    &+\left(
    -1+\frac{m^2+1}{2\omega}
    +\frac{2}{\omega^2}
    \right)\beta^2
    -\frac{3\sqrt2\,m\sinh(2r)}{\omega^2}\alpha\beta
    \Bigg]\rho\,dr,
\end{split}
\end{equation}
\begin{align*}
Q_0^\theta(\alpha)
= \int_0^\infty
\left(
\frac{(\alpha')^2}{\omega}
-\alpha^2
\right)\rho\,dr
,\quad\text{and}\quad
Q_0^r(\beta)
= \int_0^\infty
\left(
\frac{(\beta')^2}{\omega}
+\left(
-1+\frac{2}{\omega^2}
\right)\beta^2
\right)\rho\,dr.
\end{align*}
\end{prop}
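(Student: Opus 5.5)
We substitute the Fourier expansions \eqref{eq:uv-expansion} into the formula of Lemma~\ref{lem:index-form-mobius} and integrate in $\theta$. The volume element is $\sqrt2\,\omega\,d\theta\,dr$. The domain $M$ is a quotient of $\bb S^1\times\bb R$ by the free involution $(\theta,r)\mapsto(\theta+\pi,-r)$. By \eqref{eq:u-v-boundary} the integrand of Lemma~\ref{lem:index-form-mobius} is invariant under this involution: the cross terms $\sinh(2r)\,u(u_r+v_\theta)$ and $\sinh(2r)\,v(2v_r-u_\theta)$ each pick up an even number of sign changes. Hence $\int_M = \tfrac12\int_{\bb S^1\times\bb R}$. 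Each mode's $r$-integrand will turn out to be even in $r$ by \eqref{eq:m-even-parity}--\eqref{eq:m-odd-parity}, so $\tfrac12\int_{\bb R}=\int_0^\infty$. These two halvings cancel, and the factor $\sqrt2\,\omega\,d\theta$ together with the normalization $4\pi$ produces the constant $2\sqrt2$ after the $\theta$-integration. The overall constant is the first thing I would pin down, using a single test mode such as the dilation field $\alpha^1_0=-\sqrt2$.

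Next, we carry out the $\theta$-integration mode by mode. By orthogonality of $\cos(m\theta)$ and $\sin(m\theta)$ on $[0,2\pi]$, every term that is quadratic without $\theta$-derivatives, or has two $\theta$-derivatives, decouples by mode. Such a term becomes $\pi\sum_m[(\alpha^1_m)^2+(\alpha^2_m)^2]$, times $2$ because of the $\sqrt2$ normalization of $u$; for $m=0$ the measure is $2\pi$ instead. The $\theta$-derivative terms produce the factor $m^2$. Multiplying by $\omega$ from the volume element, the coefficients $\omega^{-2}\cdot\tfrac14 u_\theta^2$ and $\omega^{-2}\cdot\tfrac12 u_r^2$ become $\tfrac{m^2}{2\omega}\alpha^2$ and $\tfrac{(\alpha')^2}{\omega}$ after normalization. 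The $v$ terms behave similarly, with the weight $\tfrac12$ versus $1$ compensating for the different normalization of $v$. The mixed terms $u v_\theta$ and $v u_\theta$ pair $\cos$ with $\sin$ of the same mode, so they couple $\alpha^1_m$ with $\beta^2_m$ and $\alpha^2_m$ with $\beta^1_m$. This explains the pairing $(\alpha^1_m,\beta^2_m)$ and $(\alpha^2_m,-\beta^1_m)$, where the sign comes from $\partial_\theta\cos=-\sin$.

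The terms $u u_r$ and $v v_r$ are not in final form. We rewrite them as $\tfrac12(\alpha^2)'$ and integrate by parts in $r$ against the weight $\sinh(2r)\omega^{-2}\rho$; this is justified for $H^1_W$ functions by the cutoff argument of Remark~\ref{rmk:weighted-spaces}. Using $\rho'=-\tfrac34\sinh(2r)\rho$ and $\omega'=\tfrac32\sinh(2r)$, the resulting zeroth-order contributions combine with the explicit potential terms of Lemma~\ref{lem:index-form-mobius}. The claim is that they collapse to $-1$ in the $\alpha$ potential and to $-1+\tfrac{1}{2\omega}+\tfrac{2}{\omega^2}$ in the $\beta$ potential. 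The $\tfrac{1}{2\omega}$ here adds to $\tfrac{m^2}{2\omega}$ to give $\tfrac{m^2+1}{2\omega}$.

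I expect this algebraic simplification to be the main obstacle: verifying identities among rational functions of $\cosh(2r)$, for example $(-7+3c)(11+9c)$ against $\omega^2$, $\sinh^2(2r)$, and the derivative terms. I would substitute $c=\cosh(2r)$, use $\sinh^2(2r)=c^2-1$ and $\omega=(3c+1)/2$, and check polynomial identities. The cross coefficient $-3\sqrt2\,m\sinh(2r)/\omega^2$ comes from the two mixed terms $-\tfrac{3\sinh 2r}{2\omega^3}(u v_\theta - v u_\theta)$: after $\theta$-integration they add rather than cancel, giving $m$ times twice the coefficient, with the $\sqrt2$ from normalization. Finally, the $m=0$ modes have no coupling and yield $Q_0^\theta$ and $Q_0^r$ directly.
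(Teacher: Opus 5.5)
Your plan is the paper's proof. You substitute \eqref{eq:uv-expansion} into Lemma~\ref{lem:index-form-mobius} and integrate in $\theta$ by orthogonality, which produces the $m^2$ terms and couples $(\alpha^1_m,\beta^2_m)$ and $(\alpha^2_m,-\beta^1_m)$ through $-3\sqrt2\,m\sinh(2r)/\omega^2$. You then integrate the $(\alpha^2)'$ and $(\beta^2)'$ terms by parts and collect. Your treatment of the double cover and the reduction to $\int_0^\infty$ makes explicit what the paper leaves implicit.

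One stated input is wrong, and it sits in exactly the step you flag as the main obstacle: $\omega'=3\sinh(2r)$, not $\tfrac32\sinh(2r)$. With your value, the integration by parts leaves a spurious $+\frac{9\sinh^2(2r)}{2\omega^3}$ in both potentials, and nothing collapses. With the correct value it contributes
\[
\frac{3\cosh(2r)}{\omega^2}-\frac{9\sinh^2(2r)}{8\omega^2}-\frac{9\sinh^2(2r)}{\omega^3}
\]
to each potential. The polynomial identities in $c=\cosh(2r)$ then do give $-1+\frac{m^2}{2\omega}$ and $-1+\frac{m^2+1}{2\omega}+\frac{2}{\omega^2}$.

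Be aware also that at $m=0$ your (correct) bookkeeping will not reproduce the displayed formula verbatim. Because the $\theta$-integral of $\cos^2(0\cdot\theta)$ is $2\pi$ rather than $\pi$, the $m=0$ terms enter $2\sqrt2\,Q(U)$ as $2Q_0^\theta(\alpha_0)+2Q_0^r(\beta_0)$. So calibrating the overall constant with the dilation field alone would give the wrong constant for $m\ge 1$. In addition, setting $m=0$ in your general formula gives the $\beta_0$-potential $-1+\frac{1}{2\omega}+\frac{2}{\omega^2}$, not the printed $-1+\frac{2}{\omega^2}$. Your version is the form actually used before Proposition~\ref{prop:mode-0-r-analysis}, and the one for which $\sinh(2r)$ is a Jacobi field. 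Neither discrepancy affects the stability argument, since each mode is analyzed separately for sign.
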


\begin{proof}
Using that Fourier modes of different frequencies are orthogonal, one can verify
\[
\begin{split}
    \frac{1}{2}\int_0^{2\pi} u^2 \,d\theta &= \pi\sum_{m=0}^\infty  ((\alpha^1_m)^2 + (\alpha^2_m)^2)   
    ,\qquad 
    \int_0^{2\pi} v^2 \,d\theta = \pi\sum_{m=0}^\infty  ((\beta^1_m)^2 + (\beta^2_m)^2),   \\
    \int_0^{2\pi} \left( \frac{1}{4}u_\theta^2 + \frac{1}{2}v_\theta^2 \right) \,d\theta &= \frac{\pi}{2}\sum_{m=0}^\infty  m^2((\alpha^1_m)^2 + (\alpha^2_m)^2 + (\beta^1_m)^2 + (\beta^2_m)^2),  \\
    \int_0^{2\pi} \left( \frac{1}{2}u_r^2 +v_r^2 \right) \,d\theta &= \pi\sum_{m=0}^\infty  m^2(((\alpha^1_m)')^2 + ((\alpha^2_m)')^2 + ((\beta^1_m)')^2 + ((\beta^2_m)')^2),  \\
    \int_0^{2\pi} \left( uu_r +2vv_r \right) \,d\theta &= \pi\sum_{m=0}^\infty  (((\alpha^1_m)^2)' + ((\alpha^2_m)^2)' + ((\beta^1_m)^2)' + ((\beta^2_m)^2)'),  \\
    \int_0^{2\pi} \left( uv_\theta - u_\theta v \right) \,d\theta &= 2\sqrt{2}\pi\sum_{m=0}^\infty  m(\alpha^1_m \beta^2_m - \alpha^2_m\beta^1_m).
\end{split}
\]
We then substitute these identities into Lemma \ref{lem:index-form-mobius}, recalling that the volume form in these coordinates is $\sqrt{2}\omega\, d\theta\, dr$. 
Integrating by parts on the $((\alpha^1_m)^2)' + ((\alpha^2_m)^2)' + ((\beta^1_m)^2)' + ((\beta^2_m)^2)'$ term and collecting the remaining terms yields the claimed formula. 
For $m>0$, one can observe that swapping $(\alpha^1_m, \beta^2_m)$ for $(\alpha^2_m, -\beta^1_m)$ reverses their roles. For $m=0$, there are only the Fourier coefficients $\alpha_0,\beta_0$, and there is no cross term as the last identity vanishes.
\end{proof}

\begin{remark}
    From the integral identities above, we also note that \[2\sqrt{2}\pi \|U\|^2_{L^2_W} = \sum_{m=0}^\infty \int_0^\infty ((\alpha^1_m)^2 + (\alpha^2_m)^2 + (\beta^1_m)^2 + (\beta^2_m)^2 \rho\, dr.\]
    By the splitting above, henceforth we need only analyze the forms $Q_m$ on pairs $(\alpha,\beta)$ relative to the $\rho$-weighted $L^2$ norm $\int_0^\infty (\alpha^2+\beta^2)\rho \, dr$. 
\end{remark}

\section{\bf Modes $3$ and $4$}
\label{sec:34-mode}

A feature of the linear stability problem for mean curvature flow is that we have the explicit eigenvectors as in Table \ref{tab:evectors}. 
We first describe a convenient reparametrisation given a positive (for both functions) eigenvector. This lemma will also be useful for other modes, but essentially completes the analysis for $m=3$ and $4$, so we include it here. 
See \cite{B27} for a more general formulation; we have stated the lemma below in a form most applicable for our purposes here. 
We will keep using the notations in \eqref{convention-omega-rho}.

\begin{lemma}
    \label{lem:pos-eigenvector}
    Consider the operator $\mathcal{L}$ on $H^1_W(\mathbb{R})\oplus H^1_W(\mathbb{R})$ defined by 
    $$\mathcal{L}= \frac{1}{\rho}\frac{d}{dr}\left(\frac{\rho}{\omega} \frac{d}{dr}\right) = \frac{1}{\omega}\frac{d^2}{dr^2} +\left(\frac{\rho'}{\rho}-\frac{\omega'}{\omega}\right)\frac{d}{dr}$$ 
    and let $B = \begin{pmatrix} B_1 & -B_3 \\ -B_3 & B_2\end{pmatrix}$ be a smooth, symmetric, $L^2_\rho$-integrable
    matrix-valued function on $\mathbb{R}$.
    Suppose $(\alpha,\beta),(\tilde{\alpha},\tilde{\beta})\in H^1_W(\mathbb{R})\oplus H^1_W(\mathbb{R})$ are both even-odd (resp. odd-even) pairs, and $(\tilde{\alpha},\tilde{\beta})$ is additionally a solution of 
    \begin{align*}
    \begin{cases}
    \mathcal L \td \alpha = B_1\td\alpha - B_3\td\beta\\
    \mathcal L \td \beta = -B_3\td\alpha + B_2\td\beta
    \end{cases},
    \end{align*}
    which we will simply write as $\pr{\mathcal L-B}(\td\alpha,\td\beta)=0.$
    If $\td\alpha$ and $\td\beta$ are positive on $\bb R_+,$ then writing $a= \alpha/\tilde{\alpha}$, $b=\beta/\tilde{\beta}$, we have 
    \[
    \begin{split}
        \int_0^\infty &\left( \frac{1}{\omega}((\alpha')^2 + (\beta')^2) + B_1\alpha^2 + B_2 \beta^2 - 2B_3 \alpha\beta \right) \rho \,dr 
        \\& = \int_0^\infty \left( \frac{\tilde{\alpha}^2}{\omega} (a')^2 + \frac{\tilde{\beta}^2}{\omega} (b')^2 + B_3 \tilde{\alpha}\tilde{\beta} (a -b)^2 \right) \rho \,dr.
    \end{split}
    \]
\end{lemma}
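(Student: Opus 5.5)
This is the ground-state (Picone-type) substitution. The plan is to work separately on each component: expand the gradient terms after writing $\alpha=a\tilde\alpha$ and $\beta=b\tilde\beta$, integrate by parts once, and use the equation satisfied by $(\tilde\alpha,\tilde\beta)$ to cancel the potential terms.

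\textbf{The $\alpha$ component.} For the first component,
\[
(\alpha')^2=\tilde\alpha^2(a')^2+2a a'\tilde\alpha\tilde\alpha'+a^2(\tilde\alpha')^2
=\tilde\alpha^2(a')^2+\bigl(a^2\tilde\alpha\tilde\alpha'\bigr)'-a^2\tilde\alpha\tilde\alpha''.
\]
It is cleaner to do this directly in weighted form:
\[
\int_0^\infty \frac{\rho}{\omega}(\alpha')^2\,dr=\int_0^\infty \frac{\rho}{\omega}\tilde\alpha^2(a')^2\,dr+\int_0^\infty \frac{\rho}{\omega}\bigl(a^2\tilde\alpha\tilde\alpha'\bigr)'\,dr+\int_0^\infty \frac{\rho}{\omega}a^2(\tilde\alpha')^2-\frac{\rho}{\omega}\cdot 2aa'\tilde\alpha\tilde\alpha'\ldots
\]
Rather than expand this, I would use the standard identity
\[
\frac{\rho}{\omega}(\alpha')^2=\frac{\rho}{\omega}\tilde\alpha^2(a')^2+\Bigl(\frac{\rho}{\omega}a^2\tilde\alpha\tilde\alpha'\Bigr)'-a^2\tilde\alpha\Bigl(\frac{\rho}{\omega}\tilde\alpha'\Bigr)'
=\frac{\rho}{\omega}\tilde\alpha^2(a')^2+\Bigl(\frac{\rho}{\omega}\alpha\,\frac{\alpha\tilde\alpha'}{\tilde\alpha}\Bigr)'-\rho\,\frac{\alpha^2}{\tilde\alpha}\mathcal L\tilde\alpha .
\]
This is checked by expanding the middle derivative.

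\textbf{Cancelling the potentials.} Substituting $\mathcal L\tilde\alpha=B_1\tilde\alpha-B_3\tilde\beta$ gives
\[
-\rho\,\frac{\alpha^2}{\tilde\alpha}\mathcal L\tilde\alpha=-B_1\alpha^2\rho+B_3 a^2\tilde\alpha\tilde\beta\,\rho .
\]
Doing the same for $\beta$ gives $-B_2\beta^2\rho+B_3 b^2\tilde\alpha\tilde\beta\,\rho$. Adding $B_1\alpha^2+B_2\beta^2-2B_3\alpha\beta$ to these, the $B_1,B_2$ terms cancel. What remains is
\[
B_3\tilde\alpha\tilde\beta\,(a^2+b^2-2ab)=B_3\tilde\alpha\tilde\beta\,(a-b)^2,
\]
since $\alpha\beta=ab\,\tilde\alpha\tilde\beta$. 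This is exactly the claimed integrand.

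\textbf{Main obstacle: boundary terms.} The main work is showing that the boundary terms $\bigl[\frac{\rho}{\omega}\frac{\alpha^2\tilde\alpha'}{\tilde\alpha}\bigr]$ and the analogous $\beta$ term vanish at $r=0$ and $r=\infty$.

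At $r=0$ I would use the parity assumption. Suppose $\tilde\alpha$ is even and positive, so $\tilde\alpha(0)\ne0$. Then $\tilde\alpha'(0)=0$, and the term vanishes. Suppose instead $\tilde\alpha$ is odd and positive on $\mathbb R_+$. Then $\tilde\alpha\sim cr$ with $c>0$, assuming a simple zero; this follows from ODE uniqueness, since $\tilde\alpha(0)=\tilde\alpha'(0)=0$ would force trivial data only for the coupled system, so I would assume or argue nondegeneracy. In that case $\alpha$ is also odd, so $\alpha^2/\tilde\alpha=O(r)\to0$, while $\tilde\alpha'$ stays bounded. The same reasoning applies to $\beta$ with the opposite parity.

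At infinity I would apply the cut-off argument of Remark \ref{rmk:weighted-spaces}. Multiply by a cutoff $\phi_R$ and note that the error terms are controlled by $H^1_W$ norms, given that $\tilde\alpha'/\tilde\alpha$ grows at most polynomially in $\omega$, which holds for the explicit eigenvectors. Let $R\to\infty$ using dominated convergence, with the Gaussian decay of $\rho$ absorbing the polynomial factors. In applications $\tilde\alpha,\tilde\beta$ are explicit hyperbolic functions, so these growth bounds can be checked directly.

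Finally, the identity on $[0,\infty)$ holds whenever the right-hand side is finite; otherwise, both sides are interpreted via this limiting procedure.
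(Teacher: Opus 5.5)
Your proposal is correct and is essentially the paper's proof. With $\sigma=\rho/\omega$, the paper differentiates $\sigma\tilde\alpha'\alpha^2/\tilde\alpha$ (your divergence term) and uses the equation for $(\tilde\alpha,\tilde\beta)$ to cancel the $B_1,B_2$ potentials. It then collects the $B_3$ terms into $(a-b)^2$ and kills the boundary term at $r=0$ by parity, implicitly leaving $r=\infty$ to the cut-off argument of Remark~\ref{rmk:weighted-spaces}.

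The nondegeneracy at $r=0$ that you flag ($\tilde\alpha(0)>0$ in the even case, a simple zero in the odd case) does not follow from ODE uniqueness for the coupled system. However, the paper also assumes it tacitly, and it holds for every explicit eigenvector to which the lemma is applied.
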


\begin{proof}
    It is convenient to set $\sigma = {\rho}/{\omega}$. 
    The equation $\pr{\mathcal{L}-B}(\tilde{\alpha},\tilde{\beta}) = 0$ then becomes 
    \begin{equation}
    \label{eq:pos-eigenmodes-1}
    \begin{split}
        \frac{1}{\omega}\left( \tilde{\alpha}'' + \frac{\sigma'}{\sigma} \tilde{\alpha}' \right) &= B_1 \tilde{\alpha} -B_3 \tilde{\beta}
        \,\text{ and}\\
        \frac{1}{\omega} \left(\tilde{\beta}'' + \frac{\sigma'}{\sigma} \tilde{\beta}'\right) &= B_2 \tilde{\beta} -B_3 \tilde{\alpha}.
    \end{split}
    \end{equation}
    Differentiating $a=\alpha/\tilde{\alpha}$, first note that \[\tilde{\alpha}^2 (a')^2 - (\alpha')^2 = - 2 a \alpha' \tilde{\alpha}' + a^2 (\tilde{\alpha}')^2.\] 
    Using \eqref{eq:pos-eigenmodes-1}, we have 
    \[
    \begin{split}
        \left(\frac{\alpha^2}{\tilde{\alpha}}\tilde{\alpha}' \sigma\right)' &= \sigma\left(\frac{\alpha^2}{\tilde{\alpha}}\tilde{\alpha}'' + \frac{\sigma'}{\sigma}\frac{\alpha^2}{\tilde{\alpha}} \tilde{\alpha}' + 2a \alpha'\tilde{\alpha}' - a^2 (\tilde{\alpha}')^2 \right)
        = \rho\left(\tilde{\alpha}a^2 (B_1 \tilde{\alpha} - B_3 \tilde{\beta})\right) + \sigma\left(2a \alpha'\tilde{\alpha}' - a^2 (\tilde{\alpha}')^2\right).
    \end{split}
    \]
    As $\alpha$ and $\tilde{\alpha}$ are both even (resp. both odd), $(\alpha \tilde{\alpha}')(0)=(0,0)$. Integrating by parts gives
    \[\int_0^\infty \left((\alpha')^2-\tilde{\alpha}^2 (a')^2\right)\sigma\, dr= -\int_0^\infty \tilde{\alpha}a^2 (B_1 \tilde{\alpha} - B_3 \tilde{\beta})\rho\, dr.\]
    By the same argument (interchanging $\alpha,\tilde{\alpha},a$ for $\beta,\tilde{\beta},b$), 
    \[\int_0^\infty \left( (\beta')^2-\tilde{\beta}^2 (b')^2\right)\sigma\, dr= -\int_0^\infty \tilde{\beta}b^2 (B_2 \tilde{\beta} - B_3 \tilde{\alpha})\rho\, dr.\]
    Summing these and collecting $B$ terms on the right-hand side yields
    \[
    \begin{split}
    \int_0^\infty &\left( (\alpha')^2 + (\beta')^2) \right)\sigma \, dr 
    -\int_0^\infty \left(\tilde{\alpha}^2 (a')^2 + \tilde{\beta}^2 (b')^2\right)\sigma\,dr\\
    &=\int_0^\infty \left( -B_1\tilde{\alpha}^2 a^2 -B_2 \tilde{\beta}^2 b^2 + B_3 \tilde{\alpha}\tilde{\beta}(a^2+b^2) \right)\rho\,dr.
    \end{split}
    \]
    Move the $B_1,B_2$ terms to the left-hand side and subtract $\int_0^\infty 2B_3\alpha\beta \rho\,dr$ to both sides, recalling that $\alpha = a\tilde{\alpha}$, $\beta = b\tilde{\beta}$; this completes the proof. 
\end{proof}

\subsection{$m=3$ mode}
\label{sec:3-mode}

In this section, we prove the non-negativity of the form $Q_3,$ which is given in~\eqref{eq:Q-m} by
\begin{align*}
Q_3(\alpha,\beta)
= \int_0^\infty
\pr{
	\frac{(\alpha')^2+(\beta')^2}{\omega}
	+\left(-1+\frac{9}{2\omega}\right)\alpha^2
	+\left(
	-1+\frac{5}{\omega}
	+\frac{2}{\omega^2}
	\right)\beta^2
	-\frac{9\sqrt2\,\sinh(2r)}{\omega^2}\alpha\beta
}\rho\,dr
\end{align*}
for any odd function $\alpha$ and even function $\beta$ in $H^1_W(\bb R).$
Note this is the symmetry required by \eqref{eq:m-odd-parity}.
We will use the fact that $\pr{\tilde{\alpha},\tilde{\beta}}=\pr{\frac{3}{4}\sinh(2r), \frac{1}{\sqrt{2}}\cosh(2r)}$ is a $0$-eigenvector of $Q_3,$ generated by the rotation field $\frac{1}{2}\pr{V_{13}^\perp + V_{24}^\perp},$ satisfying the odd-even symmetry; see Table~\ref{tab:evectors}.

\begin{lemma}
    \label{lem:mode-3-form}
    Given any odd function $\alpha$ and even function $\beta$ in $H^1_W(\bb R),$ if we write 
    \begin{align*}
    \alpha(r) = \frac 34\sinh(2r)\,a(r)
    \,\,\text{ and }\,\,
    \beta(r) =  \frac 1{\sqrt 2}\cosh(2r)\,b(r),
    \end{align*}
    then
    \begin{align*}
    Q_3(\alpha,\beta)
    =& \int_0^\infty
    \pr{
    	\frac {9\sinh^2(2r)}{16\omega}(a')^2
    	+ \frac {\cosh^2(2r)}{2\omega}(b')^2
    	+ \frac{27\sinh^2(2r)\cosh(2r)}{8\omega^2}(a-b)^2
    }\rho\,dr.
    \end{align*}
\end{lemma}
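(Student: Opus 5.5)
This is a direct application of Lemma~\ref{lem:pos-eigenvector} with $m=3$. We read off the coefficient matrix $B$ from \eqref{eq:Q-m}:
\[
B_1=-1+\frac{9}{2\omega},\qquad B_2=-1+\frac{5}{\omega}+\frac{2}{\omega^2},\qquad B_3=\frac{9\sqrt2\,\sinh(2r)}{2\omega^2},
\]
so that the cross term $-2B_3\alpha\beta$ matches $-\frac{9\sqrt2\sinh(2r)}{\omega^2}\alpha\beta$. We take $(\tilde\alpha,\tilde\beta)=\pr{\frac34\sinh(2r),\frac1{\sqrt2}\cosh(2r)}$, which has the odd-even parity required by \eqref{eq:m-odd-parity}. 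Both components are positive on $\bb R_+$. Both also lie in $H^1_W(\bb R)$, since $\rho$ decays like $\exp(-\tfrac38 e^{2r})$ and kills any exponential growth.

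The one substantive hypothesis to verify is $(\mathcal L-B)(\tilde\alpha,\tilde\beta)=0$. Conceptually this holds because $\tilde U$ is generated by the rotation $\frac12(V_{13}^\perp+V_{24}^\perp)$ (Table~\ref{tab:evectors}). Ambient rotations preserve the shrinker equation, so they give Jacobi fields, i.e.\ elements of the kernel of the Euler--Lagrange operator of $Q$. By the splitting in Proposition~\ref{prop:form-fourier}, the mode-$3$ component then lies in the kernel of the Euler--Lagrange operator of $Q_3$, namely $\mathcal L-B$ up to the sign and normalization fixed by the integration by parts. I would still confirm this by direct computation.

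For the direct check, write $\omega=\frac{3\cosh(2r)+1}{2}$, $\omega'=3\sinh(2r)$, and $\rho'/\rho=-\frac34\sinh(2r)$. We have $\tilde\alpha''=4\tilde\alpha$ and $\tilde\beta''=4\tilde\beta$. Then
\[
\omega\,\mathcal L\tilde\alpha=3\sinh(2r)-\frac{3\sinh(2r)}{2}\pr{\frac34\sinh(2r)+\frac{3\sinh(2r)}{\omega}}\cosh(2r),
\]
and we compare this with $\omega(B_1\tilde\alpha-B_3\tilde\beta)$. After multiplying through by $\omega$, both sides become polynomials in $c=\cosh(2r)$ times $\sinh(2r)$ (or times $1$), using $\sinh^2=c^2-1$. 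Matching coefficients is routine but error-prone, and I expect this bookkeeping to be the only real obstacle. It is also the place where a slip in the constants of $Q_3$ (for instance $\frac{m^2+1}{2\omega}=\frac5\omega$) would show up. The $\tilde\beta$ equation is checked the same way.

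With the hypotheses in place, Lemma~\ref{lem:pos-eigenvector} gives
\[
Q_3(\alpha,\beta)=\int_0^\infty\pr{\frac{\tilde\alpha^2}{\omega}(a')^2+\frac{\tilde\beta^2}{\omega}(b')^2+B_3\tilde\alpha\tilde\beta(a-b)^2}\rho\,dr.
\]
Substituting $\tilde\alpha^2=\frac9{16}\sinh^2(2r)$, $\tilde\beta^2=\frac12\cosh^2(2r)$, and
\[
B_3\tilde\alpha\tilde\beta=\frac{9\sqrt2\sinh(2r)}{2\omega^2}\cdot\frac34\sinh(2r)\cdot\frac{1}{\sqrt2}\cosh(2r)=\frac{27\sinh^2(2r)\cosh(2r)}{8\omega^2}
\]
yields exactly the stated formula.

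The quotients $a=\alpha/\tilde\alpha$ and $b=\beta/\tilde\beta$ need a remark. Here $b$ is smooth, and $a$ is smooth at $0$ because $\alpha$ and $\tilde\alpha$ are both odd with $\tilde\alpha'(0)\neq0$. The boundary term at $0$ in the lemma vanishes by parity. At $\infty$ the boundary terms vanish by the cutoff argument of Remark~\ref{rmk:weighted-spaces}: we first prove the identity for compactly supported smooth pairs and then pass to the limit by density.
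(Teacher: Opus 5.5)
Your proposal is correct and is exactly the paper's argument: read off $B_1,B_2,B_3$ from \eqref{eq:Q-m} with $m=3$, take the rotational Jacobi field $(\tilde\alpha,\tilde\beta)=\pr{\frac34\sinh(2r),\frac1{\sqrt2}\cosh(2r)}$, which is positive on $\bb R_+$, and apply Lemma~\ref{lem:pos-eigenvector}, where $B_3\tilde\alpha\tilde\beta=\frac{27\sinh^2(2r)\cosh(2r)}{8\omega^2}$ produces the $(a-b)^2$ term. One small slip: in your displayed $\omega\mathcal L\tilde\alpha$ the drift term should be $-\frac32\cosh(2r)\pr{\frac34\sinh(2r)+\frac{3\sinh(2r)}{\omega}}$, without the extra factor of $\sinh(2r)$; with that correction the $\tilde\alpha$-equation reduces to the identity $2\omega=3\cosh(2r)+1$, and the $\tilde\beta$-equation also checks out.
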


\begin{cor}
    \label{cor:mode-3-analysis}
    Given any odd function $\alpha$ and even function $\beta$ in $H^1_W(\bb R),$ $Q_3(\alpha,\beta)\ge 0.$
    Moreover, $Q_3(\alpha,\beta)>0$ except when $(\alpha,\beta)=C(\frac{3}{4}\sinh(2r), \frac{1}{\sqrt{2}}\cosh(2r))$ for some constant $C\in\mathbb{R}$.
\end{cor}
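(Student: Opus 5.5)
The plan is to read both claims off the identity in Lemma~\ref{lem:mode-3-form}. All three weights there, $\frac{9\sinh^2(2r)}{16\omega}$, $\frac{\cosh^2(2r)}{2\omega}$ and $\frac{27\sinh^2(2r)\cosh(2r)}{8\omega^2}$, are nonnegative on $(0,\infty)$, and they are strictly positive on $(0,\infty)$ since $\omega>0$, $\cosh(2r)\ge1$ and $\sinh(2r)>0$ there. Since $\rho>0$, the integrand is pointwise nonnegative, so $Q_3(\alpha,\beta)\ge0$.

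Before using the lemma I need to justify the substitution. It comes from Lemma~\ref{lem:pos-eigenvector} applied to the zero-eigenvector $(\tilde\alpha,\tilde\beta)=(\frac34\sinh(2r),\frac1{\sqrt2}\cosh(2r))$ from Table~\ref{tab:evectors}, which is positive on $\bb R_+$. The quotient $b=\beta/\tilde\beta$ is harmless because $\cosh(2r)>0$. The quotient $a=\alpha/\tilde\alpha$ is a priori singular at $r=0$, but $\alpha$ is odd, so for smooth $\alpha$ the function $a$ extends smoothly. For general $\alpha\in H^1_W$ I would argue by density: prove the identity and the estimate for smooth compactly supported odd/even pairs, where the boundary term $(\alpha^2\tilde\alpha'/\tilde\alpha)\sigma$ vanishes at $0$ because $\alpha^2/\tilde\alpha=O(r)$, and then pass to the limit. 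Nonnegativity survives by continuity of $Q_3$ in $H^1_W$.

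For the rigidity statement, suppose $Q_3(\alpha,\beta)=0$. Each of the three nonnegative terms must then vanish, and because the weights are positive on $(0,\infty)$ this forces $a'=0$, $b'=0$ and $a=b$ almost everywhere on $(0,\infty)$. Hence $a\equiv b\equiv C$ on $(0,\infty)$, so $(\alpha,\beta)=C(\tilde\alpha,\tilde\beta)$ there, and by the odd/even parity this holds on all of $\bb R$. Conversely, such pairs give $Q_3=0$. Outside this family $Q_3>0$.

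The main obstacle is making the equality case rigorous for non-smooth $H^1_W$ data, since the identity was only derived formally. I would first try to show that $a,b$ are locally $H^1$ on $(0,\infty)$ and that the identity holds for every $(\alpha,\beta)\in H^1_W$. This can be done by integrating by parts on $[\epsilon,R]$ and checking that the boundary terms vanish in the limit, using the super-exponential decay of $\rho$ at infinity and the vanishing of $\alpha^2\tilde\alpha'/\tilde\alpha$ as $\epsilon\to0$ for odd $H^1$ functions. If that is awkward, I would fall back on a different route: a minimizer of $Q_3$ with $Q_3=0$ is a weak solution of $(\mathcal L-B)(\alpha,\beta)=0$, hence smooth by elliptic regularity, and the smooth argument above then applies.
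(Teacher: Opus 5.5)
Your proposal is correct and follows the paper's argument. The paper reads the corollary directly off the ground-state identity of Lemma~\ref{lem:mode-3-form}, which is Lemma~\ref{lem:pos-eigenvector} applied to the rotational Jacobi field $(\frac34\sinh(2r),\frac{1}{\sqrt2}\cosh(2r))$. All three weights in that identity are positive on $(0,\infty)$, so $Q_3=0$ forces $a'=b'=0$ and $a=b$. Your extra care with the boundary term at $r=0$ and with extending the identity to all of $H^1_W$ only spells out what the paper covers by the cut-off argument mentioned in Remark~\ref{rmk:weighted-spaces}.
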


\begin{proof}
	[Proof of Lemma~\ref{lem:mode-3-form}]
    Since $\pr{\tilde{\alpha},\tilde{\beta}}=\pr{\frac{3}{4}\sinh(2r), \frac{1}{\sqrt{2}}\cosh(2r)}$ is a $0$-eigenvector of the associated operator, $(\mathcal{L}-B)(\tilde{\alpha},\tilde{\beta})=(0,0)$, where $\mathcal{L}$, $B$ are as in Lemma \ref{lem:pos-eigenvector} with
    \begin{align*}
    B_1 &= -1 + \frac 9{2\omega},\\
    B_2 &= -1 + \frac 5\omega + \frac 2{\omega^2},\text{ and}\\
    B_3 &= \frac{9\sqrt 2 \sinh(2r)}{2\omega^2},
    \end{align*}
    the desired form follows from Lemma~\ref{lem:pos-eigenvector} as $\tilde{\alpha}$ and $\tilde{\beta}$ are positive on $\mathbb{R}_+.$ 
\end{proof}

\subsection{$m=4$ mode}
\label{sec:4-mode}

In this section, we prove the non-negativity of the form $Q_4,$ which is given in~\eqref{eq:Q-m} by
\begin{align*}
Q_4(\alpha,\beta)
= \int_0^\infty
\pr{
	\frac{(\alpha')^2+(\beta')^2}{\omega}
	+\left(-1+\frac{8}{\omega}\right)\alpha^2
	+\left(
	-1+\frac{17}{2\omega}
	+\frac{2}{\omega^2}
	\right)\beta^2
	-\frac{12\sqrt2\,\sinh(2r)}{\omega^2}\alpha\beta
}\rho\,dr
\end{align*}
for any even function $\alpha$ and odd function $\beta$ in $H^1_W(\bb R).$
Note this symmetry is that required by \eqref{eq:m-even-parity}.

\begin{lemma}
    \label{lem:mode-4-form}
    Given any even function $\alpha$ and odd function $\beta$ in $H^1_W(\bb R),$ if we write 
    \begin{align*}
    \alpha(r) = \cosh(2r)\, a(r)
    \,\,\text{ and }\,\,
    \beta(r) = \frac 1{\sqrt 2}\sinh(2r)\, b(r),
    \end{align*}
    then
    \begin{align*}
    Q_4(\alpha,\beta)
    = \int_0^\infty
    \Bigg(&\frac{\cosh^2(2r)}\omega (a')^2 
	+ \frac{\sinh^2(2r)}{2\omega} (b')^2
	+ \frac{6\cosh(2r)\sinh^2(2r)}{\omega^2} (a-b)^2\\
	& + \frac{\cosh(2r) (7\cosh(2r)-3)}{2\omega} a^2
	+ \frac{2\sinh^2(2r)}{\omega^2} b^2
    \Bigg)\rho\,dr.
    \end{align*}
\end{lemma}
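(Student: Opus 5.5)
The plan is to use the $m=3$ Jacobi field $(\tilde\alpha,\tilde\beta)$ as a comparison function for the $m=4$ mode. Its parity is wrong for $Q_4$, but after the swap $\sinh(2r)\leftrightarrow\cosh(2r)$ it becomes the pair $\tilde\alpha=\cosh(2r)$, $\tilde\beta=\frac1{\sqrt2}\sinh(2r)$. This pair is even--odd and positive on $\mathbb{R}_+$, apart from $\tilde\beta(0)=0$; there the boundary term $\beta\tilde\beta'/\tilde\beta\cdot\beta$ still vanishes, since $\beta$ is odd and so $\beta^2/\tilde\beta=O(r)$. It does not solve $(\mathcal L-B)(\tilde\alpha,\tilde\beta)=0$ for the $m=4$ coefficients, so I would not apply Lemma~\ref{lem:pos-eigenvector} directly. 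Instead I would redo its proof with a remainder term.

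\textbf{Key identity.} For a positive $\tilde\alpha$ and $a=\alpha/\tilde\alpha$, the computation in the proof of Lemma~\ref{lem:pos-eigenvector} gives, before any equation is used,
\[
\int_0^\infty \frac{(\alpha')^2}{\omega}\rho\,dr=\int_0^\infty\Big(\frac{\tilde\alpha^2}{\omega}(a')^2-a^2\tilde\alpha\,\mathcal L\tilde\alpha\Big)\rho\,dr ,
\]
and the analogous identity holds for $\beta$. Adding the zeroth-order terms of $Q_4$ then shows that $Q_4(\alpha,\beta)$ equals
\[
\int_0^\infty\Big(\frac{\tilde\alpha^2}{\omega}(a')^2+\frac{\tilde\beta^2}{\omega}(b')^2+B_3\tilde\alpha\tilde\beta(a-b)^2+R_1\tilde\alpha^2a^2+R_2\tilde\beta^2b^2\Big)\rho\,dr .
\]
Here $B_1=-1+8/\omega$, $B_2=-1+\frac{17}{2\omega}+\frac2{\omega^2}$ and $B_3=6\sqrt2\sinh(2r)/\omega^2$. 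The remainders are $R_1=(B_1\tilde\alpha-B_3\tilde\beta-\mathcal L\tilde\alpha)/\tilde\alpha$ and $R_2=(B_2\tilde\beta-B_3\tilde\alpha-\mathcal L\tilde\beta)/\tilde\beta$; they arise because $B_3\tilde\alpha\tilde\beta(a^2+b^2)$ has been absorbed into the $(a-b)^2$ term. With these choices $B_3\tilde\alpha\tilde\beta=6\cosh(2r)\sinh^2(2r)/\omega^2$, which matches the stated coefficient.

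\textbf{Computing the remainders.} I would evaluate $\mathcal L\tilde\alpha$ and $\mathcal L\tilde\beta$ explicitly using $\omega=(3\cosh 2r+1)/2$, $\omega'=3\sinh 2r$ and $\rho'/\rho=-\frac34\sinh(2r)$. A cleaner route is to compare with the $m=3$ identity: the $m=3$ Jacobi equation gives $\mathcal L\sinh(2r)$ and $\mathcal L\cosh(2r)$ in closed form. Alternatively, the derivatives of $\sinh$ and $\cosh$ swap into each other, so $\mathcal L\cosh(2r)=\frac{4\cosh 2r}{\omega}-\frac{3\sinh 2r}{\omega^2}\cdot 2\sinh 2r-\frac32\sinh^2(2r)/\omega$, and similarly for $\sinh(2r)$. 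Substituting these, I expect to get $R_1\tilde\alpha^2=\frac{\cosh(2r)(7\cosh(2r)-3)}{2\omega}$ and $R_2\tilde\beta^2=\frac{2\sinh^2(2r)}{\omega^2}$.

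\textbf{Main obstacle.} The step most likely to cause trouble is this algebraic simplification. The mixed terms $-B_3\tilde\beta/\tilde\alpha$ and $-B_3\tilde\alpha/\tilde\beta$ have to combine with $\mathcal L$ and the $B_i$ over the common denominator $\omega^2$, and a small error in $\rho'/\rho$ or $\omega'$ changes the constants $7$ and $-3$. To guard against this, I would reduce everything to polynomials in $c=\cosh 2r$, using $\sinh^2 2r=c^2-1$ and $\omega=(3c+1)/2$, and verify the identity coefficient by coefficient.
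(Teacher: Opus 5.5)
Your proposal is correct and is essentially the paper's argument. The paper absorbs your remainders into the diagonal: it applies Lemma~\ref{lem:pos-eigenvector} with $B_1=-1+\frac8\omega-\frac{7\cosh(2r)-3}{2\omega\cosh(2r)}$ and $B_2=-1+\frac{17}{2\omega}+\frac2{\omega^2}-\frac4{\omega^2}$, so that $(\cosh(2r),\frac1{\sqrt2}\sinh(2r))$ solves $(\mathcal L-B)(\tilde\alpha,\tilde\beta)=0$; this is exactly your $R_1=\frac{7\cosh(2r)-3}{2\omega\cosh(2r)}$ and $R_2=\frac4{\omega^2}$. Your predicted values of $R_1\tilde\alpha^2$ and $R_2\tilde\beta^2$ check out, and your handling of the boundary term at $r=0$, where $\tilde\beta(0)=0$, covers a point the paper does not address.
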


\begin{cor}
    \label{cor:mode-4-analysis}
    Given any even function $\alpha$ and odd function $\beta$ in $H^1_W(\bb R),$ we have $Q_4(\alpha,\beta)\ge 0.$
    Moreover, $Q_4(\alpha,\beta)>0$ unless $(\alpha,\beta)\equiv 0$.
\end{cor}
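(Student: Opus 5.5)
The corollary follows directly from Lemma~\ref{lem:mode-4-form}, whose right-hand side is a sum of five integrals with non-negative integrands. Since $\omega>0$, $\rho>0$, and $\cosh(2r)\ge 1$, we have $\sinh^2(2r)\ge 0$, $\cosh(2r)\sinh^2(2r)\ge 0$, and $7\cosh(2r)-3\ge 4>0$ on $[0,\infty)$. Each of the terms $(a')^2$, $(b')^2$, $(a-b)^2$, $a^2$, $b^2$ therefore carries a non-negative weight, and $Q_4(\alpha,\beta)\ge 0$ follows at once. I plan to take Lemma~\ref{lem:mode-4-form} as given. If I were to verify it, I would apply Lemma~\ref{lem:pos-eigenvector}-style integration by parts with the reference pair $(\cosh(2r),\tfrac1{\sqrt2}\sinh(2r))$. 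This pair is not a zero-eigenvector of $Q_4$. It is, however, the $m=4$ analogue of the $m=3$ Jacobi field, so $(\mathcal L - B)(\tilde\alpha,\tilde\beta)$ should equal an explicit non-negative diagonal multiple of $(\tilde\alpha,\tilde\beta)$, and this multiple produces the extra $a^2$ and $b^2$ terms.

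For strict positivity, suppose $Q_4(\alpha,\beta)=0$. Every integrand is then identically zero on $(0,\infty)$. In particular, the weight $\frac{\cosh(2r)(7\cosh(2r)-3)}{2\omega}\rho$ on $a^2$ is strictly positive for every $r\ge0$, so $a\equiv 0$ on $(0,\infty)$. Likewise, the weight $\frac{2\sinh^2(2r)}{\omega^2}\rho$ on $b^2$ is positive for $r>0$, so $b\equiv0$ on $(0,\infty)$. It follows that $\alpha=\cosh(2r)a$ and $\beta=\tfrac1{\sqrt2}\sinh(2r)b$ vanish on $(0,\infty)$. By the parity assumptions ($\alpha$ even, $\beta$ odd), they vanish on all of $\bb R$ away from $0$, and hence identically as $H^1_W$ functions.

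The one point needing care is regularity at $r=0$, where $\tilde\beta=\tfrac1{\sqrt2}\sinh(2r)$ vanishes, so $b=\beta/\tilde\beta$ is singular a priori. For smooth odd $\beta$, $b$ extends smoothly, since $\beta(0)=0$. For general $\beta\in H^1_W$, I would justify the identity of Lemma~\ref{lem:mode-4-form} by density of smooth odd/even pairs together with lower semicontinuity of each non-negative term, or simply note that the boundary term $\alpha\tilde\alpha'$, $\beta^2\tilde\beta'/\tilde\beta$ at $0$ vanishes for odd $\beta$. The rigidity argument does not even need the derivative terms, because the zeroth-order terms alone force $a=b=0$ on $(0,\infty)$. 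I do not expect any step here to be a genuine obstacle: the real work sits in the algebraic verification of Lemma~\ref{lem:mode-4-form}, and the corollary is a direct positivity reading of it.
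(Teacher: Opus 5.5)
Your proposal is correct and is essentially the paper's argument: the corollary is read off from Lemma~\ref{lem:mode-4-form}. Non-negativity comes from the non-negative weights, and rigidity comes from the zeroth-order terms, which equal $\frac{7\cosh(2r)-3}{2\omega\cosh(2r)}\alpha^2+\frac{4}{\omega^2}\beta^2$ and so have no singularity at $r=0$. One small sign slip in your heuristic aside: if $B$ is the coefficient matrix of $Q_4$ itself, then $(\mathcal L-B)(\tilde\alpha,\tilde\beta)=-D(\tilde\alpha,\tilde\beta)$ with $D=\mathrm{diag}\big(\frac{7\cosh(2r)-3}{2\omega\cosh(2r)},\frac{4}{\omega^2}\big)\ge 0$, and it is this $D$ that produces the extra positive terms.
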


\begin{proof}
	[Proof of Lemma~\ref{lem:mode-4-form}]
    The lemma follows from Lemma \ref{lem:pos-eigenvector} with the matrix entries defined by 
    \begin{align*}
    B_1 &= \left(-1 + \frac{8}{\omega}\right) - \frac{7\cosh(2r)-3}{2\omega \cosh(2r)},\\
    B_2 &= \left(-1+\frac{17}{2\omega}
	+\frac{2}{\omega^2}\right) - \frac{4}{\omega^2},\text{ and}\\
    B_3 &= \frac{6\sqrt2\,\sinh(2r)}{\omega^2}
    \end{align*}
    since straightforward calculations imply that $\pr{\tilde{\alpha},\tilde{\beta}}=\pr{\cosh(2r),\frac{1}{\sqrt{2}}\sinh(2r)}$ is a solution of $(\mathcal{L}-B)(\tilde{\alpha},\tilde{\beta})=(0,0)$. 
    Note that $(\tilde{\alpha},\tilde{\beta})$ is an even-odd pair, each of which is positive on $\mathbb{R}_+$.
\end{proof}

\section{\bf Modes $1$ and $2$}
\label{sec:12-mode}

We will keep using the convention in \eqref{convention-omega-rho}.

\subsection{$m=1$ mode}
\label{sec:1-mode}

In this section, we consider the form $Q_1$ given in~\eqref{eq:Q-m} by
\begin{align*}
Q_1(\alpha,\beta)
= \int_0^\infty
\pr{
	\frac{(\alpha')^2+(\beta')^2}{\omega}
	+\left(-1+\frac{1}{2\omega}\right)\alpha^2
	+\left(
	-1+\frac{1}{\omega}
	+\frac{2}{\omega^2}
	\right)\beta^2
	-\frac{3\sqrt2\,\sinh(2r)}{\omega^2}\alpha\beta
}\rho\,dr
\end{align*}
for odd functions $\alpha$ and even functions $\beta$ in $H^1_W(\bb R)$. 
We will show that $Q_1$ is nonnegative on odd-even pairs that are orthogonal to the $(-\frac{1}{2})$-eigenvector $(\sinh(r), \sqrt 2\cosh(r))$ (corresponding to translation generated by $e_3^\perp$).
Note that the symmetry constraint is required by \eqref{eq:m-odd-parity}.
Recall in this mode we also have the 0-eigenvector $\pr{\frac{3}{4}\sinh(2r), -\frac{\sqrt{2}}{2}}$ (corresponding to the rotation generated by $\frac{1}{2}(V_{13}^\perp - V_{24}^\perp)$).
See Table~\ref{tab:evectors}.
This non-negativity property is more technical than the cases in Section~\ref{sec:34-mode}, and we will first rewrite the form.

\begin{lemma}
\label{lem:mode-1-form}
    Given any odd function $\alpha$ and even function $\beta$ in $H^1_W(\bb R),$ if we write
    \begin{align*}
    h(r)=\frac{\sinh(r)}{\omega(r)}\alpha(r) + \frac{\sqrt{2}\cosh(r)}{\omega(r)}\beta(r) \,\,\text{ and }\,\,
    d(r) = \frac{\alpha(r)}{\sinh(r)}-\frac{\beta(r)}{\sqrt{2}\cosh(r)},
    \end{align*}
    then
    \begin{align}
    \label{eq:mode-1-rewritten}
    &Q_1(\alpha,\beta)\\\nonumber
    =& \int_0^\infty
    \pr{\frac{\omega^2}{\omega^2+4\omega+8}(h')^2-\frac\omega 2 h^2}\rho\,dr\\\nonumber
    &+ \int_0^\infty
    4\sinh^2(r)\cosh^2(r) \pr{\frac{4}{\omega^4} + \frac{1}{\omega^2(\omega+2)}}
    \pr{d - \frac{h'}{\frac{4\sinh(r)\cosh(r)}{\omega^2} + \frac{\sinh(r)\cosh(r)}{\omega+2}}}^2\rho\,dr\\\nonumber
    &+ \int_0^\infty
    \frac{2(\omega+2)^2\sinh^2(r)}{\omega^2} \pr{\pr{\frac{2\cosh(r)}{\omega+2} d}'}^2\rho\,dr.
    \end{align}
\end{lemma}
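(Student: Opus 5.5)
The identity is purely algebraic. The plan is to pass from $(\alpha,\beta)$ to ratios with respect to the translation eigenvector $(\sinh r,\sqrt2\cosh r)$ of Table~\ref{tab:evectors}, apply Lemma~\ref{lem:pos-eigenvector}, and then complete a square. Write $s=\sinh r$, $c=\cosh r$, so that $\omega=s^2+2c^2$, and set $\alpha=s\,p$, $\beta=\sqrt2 c\,q$. Then
\[
h=\frac{s^2p+2c^2q}{\omega},\qquad d=p-q,
\]
which inverts to
\[
p=h+\frac{2c^2}{\omega}d,\qquad q=h-\frac{s^2}{\omega}d.
\]
Since $\alpha$ is odd and $\beta$ is even, $p$, $q$, $h$ and $d$ are all even and smooth. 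In particular $h'(0)=d'(0)=0$, so boundary terms at $r=0$ vanish. Boundary terms at infinity are handled by the cutoff argument of Remark~\ref{rmk:weighted-spaces}.

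\textbf{Step 1 (ground-state transform).} The pair $(s,\sqrt2c)$ is a $(-\tfrac12)$-eigenvector of $Q_1$. Hence it solves $(\mathcal L-B)(\tilde\alpha,\tilde\beta)=0$ for
\[
B_1=-\tfrac12+\tfrac1{2\omega},\qquad B_2=-\tfrac12+\tfrac1\omega+\tfrac2{\omega^2},\qquad B_3=\tfrac{3\sqrt2\sinh(2r)}{2\omega^2},
\]
that is, the coefficients of $Q_1$ shifted by $+\tfrac12$. Both components are positive on $\bb R_+$ (a positive multiple of $s$ suffices there). Lemma~\ref{lem:pos-eigenvector} then gives
\[
Q_1=\int_0^\infty\Big(\tfrac{s^2}{\omega}(p')^2+\tfrac{2c^2}{\omega}(q')^2+\tfrac{6s^2c^2}{\omega^2}(p-q)^2-\tfrac12(\alpha^2+\beta^2)\Big)\rho\,dr,
\]
where we used $B_3\sqrt2\,sc=6s^2c^2/\omega^2$.

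\textbf{Step 2 (substitute $h,d$).} A direct expansion shows the cross terms cancel:
\[
\alpha^2+\beta^2=\omega h^2+\frac{2s^2c^2}{\omega}d^2 .
\]
This produces the term $-\tfrac\omega2h^2$. The $d^2$ terms then combine into
\[
\Big(\tfrac{6}{\omega^2}-\tfrac1\omega\Big)s^2c^2d^2 .
\]
For the kinetic part we have $p'=h'+(2c^2d/\omega)'$ and $q'=h'-(s^2d/\omega)'$. Expanding,
\[
s^2(p')^2+2c^2(q')^2=\omega(h')^2+2h'\,\Phi+\Psi,
\]
where $\Phi=s^2(2c^2d/\omega)'-2c^2(s^2d/\omega)'$ and $\Psi=s^2\big((2c^2d/\omega)'\big)^2+2c^2\big((s^2d/\omega)'\big)^2$. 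Using $(c^2)'=(s^2)'=2sc$, one gets $\Phi=-2sc\,d\cdot(\text{explicit function})$, with no $d'$ term. So the cross term is linear in $h'$ times $d$. The expression $\Psi$ contains $(d')^2$, $dd'$ and $d^2$ terms.

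\textbf{Step 3 (complete the squares).} Complete the square in $d$ against $h'$. The coefficient of $d^2$ left after Step 2 should be exactly $4s^2c^2\big(\tfrac4{\omega^4}+\tfrac1{\omega^2(\omega+2)}\big)$, which yields the middle term of \eqref{eq:mode-1-rewritten}. The leftover coefficient of $(h')^2$ should reduce to $\frac{\omega^2}{\omega^2+4\omega+8}$. The remaining $(d')^2$, $dd'$, $d^2$ terms must be recognised, after integrating $dd'$ by parts, as $\frac{2(\omega+2)^2s^2}{\omega^2}\big((\tfrac{2c}{\omega+2}d)'\big)^2$.

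I expect this last matching to be the main obstacle. It requires tracking the $d^2$ contributions coming from $\Psi$, from the $(p-q)^2$ term, from the integration by parts of $dd'$ against $\rho$ (which produces $\rho'/\rho=-\tfrac34\sinh(2r)$), and from completing the square, and checking that they all cancel. I would do this by first expanding the claimed right-hand side and comparing coefficients of $(h')^2$, $h'd$, $(d')^2$, $dd'$ and $d^2$. Each of these should be a rational identity in $c^2$ once $s^2=c^2-1$ and $\omega=3c^2-1$ are substituted.
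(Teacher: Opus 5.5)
Your Steps 1 and 2 match the paper's proof: the ground-state transform of Lemma~\ref{lem:pos-eigenvector} relative to the translation eigenvector $(\sinh r,\sqrt2\cosh r)$, then the variance splitting into $h$ and $d=p-q$. In fact the $dd'$ terms in your $\Psi$ cancel, giving $\Psi/\omega=\frac{16s^2c^2}{\omega^4}d^2+\frac{2s^2c^2}{\omega^2}(d')^2$. So after Step 2 you have $Q_1=\int_0^\infty\big[(h'-\frac{4sc}{\omega^2}d)^2-\frac\omega2h^2+\frac{2s^2c^2}{\omega^2}(d')^2+\frac{(6-\omega)s^2c^2}{\omega^2}d^2\big]\rho\,dr$.

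Step 3 does not work as you describe. The $d^2$ coefficient at this stage is $\frac{16s^2c^2}{\omega^4}+\frac{(6-\omega)s^2c^2}{\omega^2}$. It is not the middle-term coefficient $\frac{16s^2c^2}{\omega^4}+\frac{4s^2c^2}{\omega^2(\omega+2)}$, and it is even negative for large $r$. The missing idea is a second ground-state transform, in $d$ alone, relative to $d_0=\frac{\omega+2}{2\cosh r}$. This $d_0$ is the value of $d$ on the rotational Jacobi field $(\frac34\sinh(2r),-\frac{\sqrt2}{2})$. Since $d_0'=\frac{s\omega}{2c^2}$, one has $\frac{2s^2c^2}{\omega^2}d_0'\rho=\frac{s^3}{\omega}\rho$, and a short computation gives
\[
\int_0^\infty\Big(\tfrac{2s^2c^2}{\omega^2}(d')^2+\tfrac{(6-\omega)s^2c^2}{\omega^2}d^2\Big)\rho\,dr=\int_0^\infty\Big(\tfrac{2s^2c^2}{\omega^2}d_0^2\big((d/d_0)'\big)^2+\tfrac{4s^2c^2}{\omega^2(\omega+2)}d^2\Big)\rho\,dr .
\]
Only after this do you complete the square in $(h',d)$. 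That step produces the first two terms, with $F=\frac{\omega^2}{\omega^2+4\omega+8}$.

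More seriously, the final matching you anticipate cannot succeed for the formula as displayed. Written in terms of $h$ and $d$, $Q_1$ has $(d')^2$-coefficient $\frac{2s^2c^2}{\omega^2}$. The displayed third term instead has $(d')^2$-coefficient $\frac{2(\omega+2)^2s^2}{\omega^2}\cdot\frac{4c^2}{(\omega+2)^2}=\frac{8s^2c^2}{\omega^2}$. The first two terms contain no $d'$, and integrating $dd'$ or $h'd$ terms by parts never creates or changes a $(d')^2$ term.

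What the argument actually proves is the identity with third term $\frac{2s^2c^2}{\omega^2}d_0^2\big((d/d_0)'\big)^2=\frac{(\omega+2)^2\sinh^2r}{2\omega^2}\big((\frac{2\cosh r}{\omega+2}d)'\big)^2$. So the displayed coefficient is off by a factor of $4$. This is harmless for Proposition~\ref{prop:mode-1-analysis}, which only uses that this term is a nonnegative square vanishing exactly when $d$ is a constant multiple of $d_0$. Your proposal, however, asserts that the matching with the displayed coefficient holds, and that is false. The coefficient comparison you propose would expose this as soon as you compare $(d')^2$ coefficients. You should state and prove the corrected identity.
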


\begin{proof}
    In this proof, we use the convention that $c(r)=\cosh(r)$ and $s(r)=\sinh(r)$ so
    \begin{align*}
    Q_1(\alpha,\beta)
    = \int_0^\infty
    \pr{
    	\frac{(\alpha')^2+(\beta')^2}{\omega}
    	+\left(-1+\frac{1}{2\omega}\right)\alpha^2
    	+\left(
    	-1+\frac{1}{\omega}
    	+\frac{2}{\omega^2}
    	\right)\beta^2
    	-\frac{6\sqrt2 sc}{\omega^2}\alpha\beta
    }\rho\,dr.
    \end{align*}
    The first step is similar to the proof of Lemmas~\ref{lem:mode-3-form} and~\ref{lem:mode-4-form}.
    As the translational mode $\pr{\tilde{\alpha},\tilde{\beta}} = \pr{s, \sqrt{2}c}$ is a $(-1/2)$-eigenvector of $Q_1$, and hence the associated operator satisfies $(\mathcal{L}-B)(\tilde{\alpha},\tilde{\beta})=(0,0)$, where $\mathcal{L}$, $B$ are as in Lemma \ref{lem:pos-eigenvector} with $B_1 = \left(-1+\frac{1}{2\omega}\right) + \frac{1}{2}$, $B_2 = \left(
    	-1+\frac{1}{\omega}
    	+\frac{2}{\omega^2}
    	\right) + \frac{1}{2}$, $B_3 = \frac{3\sqrt{2}sc}{\omega^2}$. 
    As $\tilde{\alpha}$ and $\tilde{\beta}$ are positive on $\mathbb{R}_+$, Lemma \ref{lem:pos-eigenvector} gives
	\begin{align*}
        Q_1(\alpha,\beta)
		=\,&
		\int_0^\infty
		\pr{
			\frac{s^2}{\omega}(a')^2
			+\frac{2c^2}{\omega}(b')^2
			+\frac{6s^2c^2}{\omega^2}(a-b)^2
		}\rho\,dr
		-\int_0^\infty
		\pr{
			\frac12s^2a^2+c^2b^2
		}\rho\,dr.
	\end{align*}

	Next, we rewrite $Q_1$ using the variance formula, which says  
	\begin{align}\label{eq:variance}
	kx^2+\ell y^2=k\ell(x-y)^2+(kx+\ell y)^2
	\end{align}
	for $k+\ell=1.$
	We apply the formula to both $h = \frac{s^2}{\omega}a + \frac{2c^2}{\omega}b$ and its derivative
	\begin{align*}
	h' - \frac{4sc}{\omega^2} (a-b) = \frac{s^2}{\omega}a' + \frac{2c^2}{\omega}b'
	\end{align*} 
	to obtain 
	\begin{align*}
	\frac{s^2}{\omega}a^2 + \frac{2c^2}{\omega}b^2
	&= \frac{2s^2c^2}{\omega^2}(a-b)^2 + h^2
	\,\text{ and}\\
	\frac{s^2}{\omega}(a')^2
	+\frac{2c^2}{\omega}(b')^2
	&= \frac{2s^2c^2}{\omega^2}(a'-b')^2 + \pr{h' - \frac{4sc}{\omega^2}d}^2
	\end{align*}
	since $\frac{s^2}{\omega} + \frac{2c^2}{\omega}=1.$ 
	Considering $d:=a-b,$ we derive 
	\begin{align*}
	Q_1(\alpha,\beta)
	=\,&
	\int_0^\infty
	\pr{
		\frac{2s^2c^2}{\omega^2}(d')^2 + \pr{h' - \frac{4sc}{\omega^2}d}^2
		+\frac{6s^2c^2}{\omega^2}d^2
	}\rho\,dr
	-\int_0^\infty
	\pr{
		\frac{s^2c^2}{\omega}d^2 + \frac\omega 2 h^2
	}\rho\,dr\\
	=\,& 
	\int_0^\infty
	\pr{
		\pr{h' - \frac{4sc}{\omega^2}d}^2
		- \frac\omega 2h^2
		+ \frac{2s^2c^2}{\omega^2}(d')^2
		+ \frac{s^2c^2(6-\omega)}{\omega^2}d^2
	}\rho\,dr.
	\end{align*}

    For further simplification, we consider
    \begin{align}\label{d0-m=1}
    d_0:=\frac 32 c - \pr{-\frac 1{2c}}
    = \frac{3c^2+1}{2c}
    = \frac{\omega+2}{2c},
    \end{align}
    which is the difference $d$ when $(\alpha,\beta)$ is given by the rotational Jacobi field in the $m=1$ mode.
    Iterate integrations by parts in a direct way implies
    \begin{align*}
    \int_0^\infty
    \pr{\frac{2s^2c^2}{\omega^2}\pr{d'}^2
    + \frac{s^2c^2(6-\omega)}{\omega^2}d^2}\rho\,dr
     &= \int_0^\infty
    \pr{\frac{2s^2c^2}{\omega^2}d_0^2 \pr{\pr{\frac d{d_0}}'}^2
    + \frac{4s^2c^2}{\omega^2(\omega+2)} d^2} \rho\,dr,
    \end{align*}
    and this allows us to rewrite
    \begin{align*}
    Q_1(\alpha,\beta)
     =& \int_0^\infty
     \pr{
     	\pr{h'-\frac{4sc}{\omega^2}d}^2
     	- \frac\omega{2}h^2
     	+ \frac{4s^2c^2}{\omega^2(\omega+2)} d^2
     + \frac{2s^2c^2}{\omega^2}d_0^2 \pr{\pr{\frac d{d_0}}'}^2
     }
     \rho\,dr.
     \end{align*}
     Completing the square for $h'$ and $d$ leads to
     \begin{align*}
     \pr{h'-\frac{4sc}{\omega^2}d}^2 + \frac{4s^2c^2}{\omega^2(\omega+2)} d^2
     =&\,
     \pr{\frac{16s^2c^2}{\omega^4} + \frac{4s^2c^2}{\omega^2(\omega+2)}}d^2
     - \frac{8sc}{\omega^2}h'd
     + (h')^2\\
     =&\, \pr{\frac{16s^2c^2}{\omega^4} + \frac{4s^2c^2}{\omega^2(\omega+2)}}
     \pr{d- \pr{\frac{16s^2c^2}{\omega^4} + \frac{4s^2c^2}{\omega^2(\omega+2)}}^{-1}\frac{4sc}{\omega^2}h'}^2\\
     & + (h')^2 
     - \pr{\frac{16s^2c^2}{\omega^4} + \frac{4s^2c^2}{\omega^2(\omega+2)}}^{-1}\frac{16s^2c^2}{\omega^4}(h')^2.
     \end{align*}
    Thus, we finally obtain
    \begin{align*}
    Q_1(\alpha,\beta)
    =& \int_0^\infty
    \pr{F(h')^2-\frac\omega 2 h^2}\rho\,dr\\
    &+ \int_0^\infty
    \pr{\frac{16s^2c^2}{\omega^4} + \frac{4s^2c^2}{\omega^2(\omega+2)}}
    \pr{d- \pr{\frac{16s^2c^2}{\omega^4} + \frac{4s^2c^2}{\omega^2(\omega+2)}}^{-1}\frac{4sc}{\omega^2}h'}^2\rho\,dr\\
    &+ \int_0^\infty
    \frac{2s^2c^2}{\omega^2}d_0^2 \pr{\pr{\frac d{d_0}}'}^2\rho\,dr\,
    \end{align*}
    with 
    \begin{align*}
    F 
    = 1 - \pr{\frac{16s^2c^2}{\omega^4} + \frac{4s^2c^2}{\omega^2(\omega+2)}}^{-1}\frac{16s^2c^2}{\omega^4} 
    =\frac{\omega^2}{\omega^2+4\omega+8}.
    \end{align*}
    This finishes the proof of the lemma.
\end{proof}

The non-negativity of $Q_1$ subject to the orthogonality condition is then reduced to the following Sturm--Liouville type problem.

\begin{lemma}
\label{lem:mode-1-SL}
    Let $f$ be an even function in $H^1_W(\bb R).$ 
    If $\int_0^\infty 
    f \omega 
    \rho \,dr=0,$
    then 
    \begin{align*}
    \frac 12 \int_0^\infty
    f^2 \omega\rho\,dr
    \le \int_0^\infty (f')^2
    \cdot \frac{\omega^2\rho}{\omega^2+4\omega+8} \,dr.
    \end{align*}
\end{lemma}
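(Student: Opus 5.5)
The plan is to treat the inequality as a statement about the first nontrivial eigenvalue of a one-dimensional Sturm--Liouville problem, and to identify that eigenvalue using the geometric Jacobi field of the $m=1$ mode.

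\emph{Setup.} Set
\[
p=\frac{\omega^2\rho}{\omega^2+4\omega+8}\quad\text{and}\quad w=\omega\rho .
\]
Consider the operator $\mathcal{A}f=-\frac1w(pf')'$ on even functions, i.e. on $(0,\infty)$ with the Neumann condition $f'(0)=0$. Its quadratic form is $\int_0^\infty p(f')^2\,dr$ relative to $L^2(w\,dr)$.

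Constants lie in the kernel of $\mathcal{A}$, so $\lambda_0=0$. The hypothesis $\int_0^\infty f\,w\,dr=0$ is exactly orthogonality to this ground state. The lemma is therefore equivalent to $\lambda_1(\mathcal{A})\ge \frac12$, which follows from the min-max characterization once the spectrum is known to be discrete.

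\emph{Discreteness.} I would first check that the embedding of $\{f: \int p(f')^2+\int wf^2<\infty\}$ into $L^2(w\,dr)$ is compact. As $r\to\infty$, $\rho\sim\exp(-\tfrac{3}{16}e^{2r})$ decays super-exponentially, and $p\sim\rho$, $w\sim e^{2r}\rho$. A Muckenhoupt/Hardy-type estimate on tails $[R,\infty)$ should then give
\[
\int_R^\infty wf^2\le \varepsilon(R)\int_R^\infty p(f')^2+\text{(boundary terms)},\qquad \varepsilon(R)\to0 .
\]
One can see the mechanism by integrating $(\rho f^2)'$ and using $-\rho'/\rho=\tfrac34\sinh(2r)\gtrsim\omega$. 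This tail control gives compactness, so $\mathcal{A}$ has discrete spectrum $0=\lambda_0<\lambda_1<\cdots$ with simple eigenvalues.

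\emph{Identifying $\lambda_1$ (the key step).} The rotational Jacobi field of Table~\ref{tab:evectors} in this mode is $(\alpha,\beta)=(\tfrac32 sc,-\tfrac1{\sqrt2})$, writing $s=\sinh r$, $c=\cosh r$. It satisfies $Q_1=0$, and by \eqref{d0-m=1} its difference $d$ equals $d_0$, so the last integral in \eqref{eq:mode-1-rewritten} vanishes. Its $h$ is
\[
h_*=\frac{\tfrac32 s^2c-c}{\omega}=\frac{c(3s^2-2)}{2\omega},
\]
which is even. It is $L^2(w)$-orthogonal to constants, because the rotation and translation fields are eigenvectors of the $m=1$ operator with different eigenvalues $0$ and $-\tfrac12$. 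I expect, and would verify by direct computation, that $h_*$ solves
\[
(p h_*')'+\tfrac12 w h_*=0,\qquad h_*'(0)=0 .
\]
Consistency supports this: in the identity \eqref{eq:mode-1-rewritten}, $Q_1=0$ together with the two nonnegative remainders forces $\int\bigl(p(h_*')^2-\tfrac12wh_*^2\bigr)\le0$. If the ODE fails, I would instead obtain it from the Euler--Lagrange equation of \eqref{eq:mode-1-rewritten} at the Jacobi field.

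Granting the ODE, $h_*$ is an eigenfunction with eigenvalue $\frac12$. It has exactly one zero on $(0,\infty)$, at $\sinh^2 r=\tfrac23$. By Sturm oscillation for the Neumann problem on the half-line, the eigenfunction for $\lambda_k$ has exactly $k$ zeros, so $\tfrac12=\lambda_1$. Min-max then gives
\[
\int_0^\infty p(f')^2\,dr\ge\frac12\int_0^\infty wf^2\,dr
\]
for every $f\perp 1$, with equality only for multiples of $h_*$.

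\emph{Alternative that avoids oscillation theory.} Split at the zero $r_0$ of $h_*$. On each of $[0,r_0]$ and $[r_0,\infty)$, apply a Picone/ground-state identity with $h_*$, in the manner of Lemma~\ref{lem:pos-eigenvector}. Then combine the two pieces with the orthogonality condition, using the standard argument that a second eigenfunction with one nodal point realizes $\lambda_1$.

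\emph{Main obstacle.} The main difficulty is the explicit verification that $h_*$ satisfies the ODE, together with rigorous control of the boundary terms at infinity, which the super-exponential decay of $\rho$ should handle. The compactness step I expect to be routine.
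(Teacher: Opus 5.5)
Your proposal is correct and is essentially the paper's proof. The paper uses the same function $h_1=\frac{c(3s^2-2)}{2\omega}$ and checks directly that $\frac{\omega^2}{\omega^2+4\omega+8}h_1'=\frac{s}{2}$ and $(\rho s)'=-\rho\omega h_1$, so your expected ODE $(ph_1')'+\frac12 wh_1=0$ does hold. It then concludes from the single positive zero that $h_1$ is the second even eigenfunction, and the discreteness/compactness check you add is a point the paper leaves implicit.
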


\begin{proof}
	We use the convention that $c(r)=\cosh(r)$ and $s(r)=\sinh(r).$
	Consider
	\begin{align*}
	h_1:=\frac{s^2}\omega\cdot \frac 32c + \frac{2c^2}\omega\cdot \frac{-1}{2c}
	= \frac{c}{2\omega}\pr{3s^2-2},
	\end{align*}
	which is the function $h$ when $(\alpha,\beta)$ is given by the rotational Jacobi field in the $m=1$ mode.
	Then direct calculations lead to 
	\begin{align}\label{eq:h1-eq}
	h_1'
	&= \frac 1{2\omega}
	\pr{3s^3+6sc^2-2s}
	+ \frac{-1}{2\omega^2} 6sc\pr{3cs^2-2c}
	= \frac s{2\omega^2} \pr{\omega^2+4\omega+8}.
	\end{align}
	This implies
	\begin{align*}
	\frac{\omega^2}{\omega^2+4\omega+8} h_1'
	&= \frac{\omega^2}{\omega^2+4\omega+8}\cdot \frac s{2\omega^2} \pr{\omega^2+4\omega+8}
	= \frac s2.
	\end{align*}
	Thus, if we consider the operator $L_1$ defined by
	\begin{align*}
	L_1 f:= \frac{1}{\rho\omega} \pr{2\rho  \frac{\omega^2}{\omega^2+4\omega+8} f'}',
	\end{align*}
	then it follows that
	\begin{align*}
	L_1h_1
	= \frac 1{\rho\omega}
	(\rho s)
	= \frac 1{\rho\omega} \pr{-\rho \omega h_1}
	= -h_1.
	\end{align*}
	Since $h_1$ has exactly one zero on the positive real line, it follows that it is the second even eigenfunction of $L_1$ with the second eigenvalue $1.$
	As a result, since the function $f$ is orthogonal to the first eigenfunction $1$ with respect to the weight $\omega\rho\,dr,$ it follows that
	\begin{align*}
	-\int_0^\infty (fL_1f) \rho\omega\,dr
	\ge \int_0^\infty f^2 \rho\omega\,dr.
	\end{align*}
	Integration by parts implies the desired inequality based on the definition of $L_1.$
\end{proof}

Combining Lemmas~\ref{lem:mode-1-form} and~\ref{lem:mode-1-SL} gives us the desired estimate when $m=1.$
We summarize it in the following proposition.

\begin{prop}
    \label{prop:mode-1-analysis}
    Let $\alpha$ be an odd function and $\beta$ be an even function in $H^1_W(\bb R)$ such that
    \begin{align}\label{eq:m=1-assumption}
        \int_0^\infty
        \pr{\sinh(r)\alpha(r) + \sqrt 2\cosh(r)\beta(r)}
        \rho\,dr=0.
    \end{align}
    Then $Q_1(\alpha,\beta)\ge 0,$ with equality only if $(\alpha,\beta)=C\pr{\frac{3}{4}\sinh(2r),-\frac{\sqrt{2}}{2}}$ for some constant $C\in\mathbb{R}$. 
\end{prop}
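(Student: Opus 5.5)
We combine Lemma~\ref{lem:mode-1-form} with Lemma~\ref{lem:mode-1-SL}. Given an odd $\alpha$ and even $\beta$ in $H^1_W(\bb R)$, form $h=\frac{\sinh(r)}{\omega}\alpha+\frac{\sqrt2\cosh(r)}{\omega}\beta$ and $d=\frac{\alpha}{\sinh(r)}-\frac{\beta}{\sqrt2\cosh(r)}$. Since $\alpha$ is odd and $\beta$ is even, $h$ is even. Moreover $h\omega=\sinh(r)\alpha+\sqrt2\cosh(r)\beta$, so the hypothesis \eqref{eq:m=1-assumption} reads exactly $\int_0^\infty h\,\omega\rho\,dr=0$.

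We then need $h\in H^1_W(\bb R)$. The coefficients $\sinh(r)/\omega$ and $\cosh(r)/\omega$ are bounded and have bounded derivatives, so $h\in L^2_\rho$ and $h'\in L^2_{\rho/\omega}$ follow directly. Lemma~\ref{lem:mode-1-SL} then shows that the first integral in \eqref{eq:mode-1-rewritten} is nonnegative. The remaining two integrals in \eqref{eq:mode-1-rewritten} have nonnegative integrands, so $Q_1(\alpha,\beta)\ge0$.

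For the equality case, suppose $Q_1(\alpha,\beta)=0$. Then all three terms in \eqref{eq:mode-1-rewritten} vanish.
\begin{itemize}
\item The third term vanishing forces $\bigl(\frac{2\cosh(r)}{\omega+2}d\bigr)'=0$ on $r>0$, so $d=C d_0$ with $d_0=\frac{\omega+2}{2\cosh(r)}$.
\item The second term vanishing forces $d=h'/\bigl(\frac{4\sinh\cosh}{\omega^2}+\frac{\sinh\cosh}{\omega+2}\bigr)$. Combined with the previous point, $h'=C\,d_0\sinh(r)\cosh(r)\bigl(\frac{4}{\omega^2}+\frac1{\omega+2}\bigr)=C\frac{\sinh(r)(\omega^2+4\omega+8)}{2\omega^2}$, which by \eqref{eq:h1-eq} equals $Ch_1'$.
\item Hence $h=Ch_1+k$ for a constant $k$. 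Imposing the orthogonality $\int h\omega\rho=0$, and using that $h_1$ (an eigenfunction of $L_1$ with eigenvalue $-1$) is orthogonal to constants, gives $k\int\omega\rho=0$, so $k=0$.
\end{itemize}
Consistency with the first term is then automatic, since $h_1$ attains equality in the Sturm--Liouville inequality.

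Finally we recover $(\alpha,\beta)$ from $(h,d)$. The map is linear and pointwise invertible: writing $a=\alpha/\sinh$ and $b=\beta/(\sqrt2\cosh)$, we have $h=\frac{s^2}{\omega}a+\frac{2c^2}{\omega}b$ and $d=a-b$, a system with determinant $-(s^2+2c^2)/\omega=-1\neq0$. Therefore $(h,d)=C(h_1,d_0)$ forces $(\alpha,\beta)=C\bigl(\frac34\sinh(2r),-\frac{\sqrt2}{2}\bigr)$, since $h_1$ and $d_0$ were defined from exactly this Jacobi field.

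The main obstacle is rigor in the regularity details. First, we must check that $d$ makes sense near $r=0$: $\alpha$ is odd, so $\alpha/\sinh$ is regular. Second, the integration-by-parts identities behind Lemma~\ref{lem:mode-1-form} need to be justified for $H^1_W$ functions via the cut-off argument of Remark~\ref{rmk:weighted-spaces}. Third, the pointwise deductions in the equality case require $d$ to be locally absolutely continuous, which holds since $\alpha,\beta\in H^1_{\rm loc}$ and the divisors $\sinh(r)$, $\cosh(r)$ are smooth and nonvanishing for $r>0$.
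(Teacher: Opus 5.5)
Your proposal is correct and follows the paper's proof essentially step for step. Nonnegativity comes from applying Lemma~\ref{lem:mode-1-SL} to $h$ in the decomposition of Lemma~\ref{lem:mode-1-form}. Rigidity comes from forcing $d=Cd_0$, hence $h'=Ch_1'$, then removing the additive constant via $\int_0^\infty h_1\omega\rho\,dr=0$ (which also follows directly from $h_1\omega\rho=-(\rho\sinh r)'$), and finally inverting the pointwise invertible linear map $(a,b)\mapsto(h,d)$.

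Your regularity remarks supply details the paper leaves implicit: $h\in H^1_W$, and indeed $h'\in L^2_\rho$, because $\sinh(r)/\omega$ and $\cosh(r)/\omega$ decay like $e^{-r}$.
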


\begin{proof}
    As in Lemma~\ref{lem:mode-1-form}, let $h(r):=\frac{\sinh(r)}{\omega(r)}\alpha(r) + \frac{\sqrt 2\cosh(r)}{\omega(r)}\beta(r)$. The assumption~\eqref{eq:m=1-assumption} means $\int_0^\infty h \omega\rho\,dr=0,$
    so Lemma~\ref{lem:mode-1-SL} implies that the first term in the formula \eqref{eq:mode-1-rewritten} for $Q_1$ is nonnegative. The remaining terms are all square terms, and so $Q_1(\alpha,\beta)\geq 0$. 

    Moreover, if $Q_1(\alpha,\beta) =0$, then all the square terms must vanish. 
    In particular, writing $s(r)=\sinh(r)$ and $c(r)=\cosh(r)$ as in the proof of Lemma \ref{lem:mode-1-form}, we have 
    \begin{equation}
    \label{eq:mode-1-d0}
        \frac{\alpha(r)}{\sinh(r)}-\frac{\beta(r)}{\sqrt{2}\cosh(r)} = C\cdot \frac{\omega+2}{2c}
    \end{equation} 
    for some $C\in \mathbb{R}$, so
    \[h' = \left(\frac{4}{\omega^2}+\frac{1}{\omega+2}\right)sc \left(C \cdot \frac{\omega+2}{2c} \right)= C\cdot \frac{s}{2} \cdot \frac{\omega^2+4\omega + 8}{\omega^2}. \]
    Recall from the proof of Lemma \ref{lem:mode-1-SL} that $h_1 = \frac{c}{2\omega}(3s^2-1)$ satisfies \eqref{eq:h1-eq}.
    Since $\int_0^\infty h \omega\rho\,dr=\int_0^\infty h_1 \omega\rho\,dr=0$, it follows that indeed 
    \begin{equation}
    \label{eq:mode-1-h1}
        \frac{\sinh(r)}{\omega(r)}\alpha(r) + \frac{\sqrt 2\cosh(r)}{\omega(r)}\beta(r) = Ch_1
    \end{equation} Simple linear algebra applied to \eqref{eq:mode-1-d0} and \eqref{eq:mode-1-h1} yields that $(\alpha,\beta) = C\pr{\frac{3}{4}\sinh(2r), -\frac{\sqrt{2}}{2}}$. 
\end{proof}

\subsection{$m=2$ mode}
\label{sec:2-mode}

In this section, we consider the form $Q_2$  given in~\eqref{eq:Q-m} by
\begin{align*}
Q_2(\alpha,\beta)
= \int_0^\infty
\pr{
	\frac{(\alpha')^2+(\beta')^2}{\omega}
	+\left(-1+\frac{2}{\omega}\right)\alpha^2
	+\left(
	-1+\frac{5}{2\omega}
	+\frac{2}{\omega^2}
	\right)\beta^2
	-\frac{6\sqrt2\,\sinh(2r)}{\omega^2}\alpha\beta
}\rho\,dr
\end{align*}
for even functions $\alpha$ and odd functions $\beta$ in $H^1_W(\bb R)$. We will show that $Q_2$ is nonnegative on even-odd pairs that are orthogonal to the $(-\frac{1}{2})$-eigenvector $(\sqrt 2\cosh(r), \sinh(r)),$ corresponding to translation generated by $-e_1^\perp$; see Table~\ref{tab:evectors}.
Note that the parity constraint is required by \eqref{eq:m-even-parity}.
As in the $m=1$ case, this non-negativity property is more technical than the cases in Section~\ref{sec:34-mode}, and we will first rewrite the form. 

\begin{lemma}
    \label{lem:mode-2-form}
    Given any even function $\alpha$ odd even function $\beta$ in $H^1_W(\bb R),$ if we write
    \begin{align*}
    h(r)=\frac{\sqrt{2}\cosh(r)}{\omega(r)}\alpha(r) + \frac{\sinh(r)}{\omega(r)}\beta(r)\,\,\text{ and }\,\,
    d(r)= \frac{\alpha(r)}{\sqrt{2}\cosh(r)} - \frac{\beta(r)}{\sinh(r)},
    \end{align*}
    then 
    \begin{align}
    \label{eq:mode-2-rewritten}
    Q_2(\alpha,\beta)
    =&\, \frac 12 \int_0^\infty
    \pr{(h')^2-\omega h^2}\rho\,dr
    + \frac 12 \int_0^\infty
    \pr{h' + \frac{8\sinh(r) \cosh(r)}{\omega^2}d}^2\rho\,dr\\\nonumber
    &+ \int_0^\infty
    \frac{2\cosh^4(r) \sinh^2(r)}{\omega^2}\pr{\pr{\frac{d}{\cosh(r)}}'}^2\rho\,dr\\\nonumber
    &+ \int_0^\infty
    \frac{4\sinh^2(r)}{\omega^4}
    \pr{18\sinh^6r+39\sinh^4r+20\sinh^2(r)} d^2\rho\,dr.
    \end{align} 
\end{lemma}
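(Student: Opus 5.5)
We follow the template of the proof of Lemma~\ref{lem:mode-1-form}. Write $c=\cosh(r)$, $s=\sinh(r)$, and note $\omega=s^2+2c^2$.

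\textbf{Ground-state transformation.} The translational field $(\tilde\alpha,\tilde\beta)=(\sqrt2 c, s)$ from Table~\ref{tab:evectors} (generated by $-e_1^\perp$) is a $(-\tfrac12)$-eigenvector of $Q_2$. It is an even--odd pair, positive on $\mathbb R_+$. So Lemma~\ref{lem:pos-eigenvector} applies with
\[
B_1=-\tfrac12+\tfrac{2}{\omega},\qquad B_2=-\tfrac12+\tfrac{5}{2\omega}+\tfrac{2}{\omega^2},\qquad B_3=\tfrac{6\sqrt2 sc}{\omega^2}.
\]
This should be checked by direct substitution, as for $m=1$. Setting $a=\alpha/(\sqrt2 c)$, $b=\beta/s$ and $d=a-b$, we obtain
\[
Q_2(\alpha,\beta)=\int_0^\infty\Big(\tfrac{2c^2}{\omega}(a')^2+\tfrac{s^2}{\omega}(b')^2+\tfrac{12 s^2c^2}{\omega^2}d^2\Big)\rho\,dr-\int_0^\infty\Big(c^2a^2+\tfrac12 s^2b^2\Big)\rho\,dr .
\]

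\textbf{Variance formula.} Apply \eqref{eq:variance} with weights $k=2c^2/\omega$ and $\ell=s^2/\omega$, which satisfy $k+\ell=1$. We have $h=ka+\ell b$, and differentiating gives
\[
h'=ka'+\ell b'+(k'a+\ell' b)=ka'+\ell b'+k'd,
\]
using $\ell'=-k'$. A short computation gives $k'=4sc/\omega^2$, so $ka'+\ell b'=h'-\tfrac{4sc}{\omega^2}d$. Hence:
\begin{itemize}
\item the gradient part equals $\tfrac{2s^2c^2}{\omega^2}(d')^2+\big(h'-\tfrac{4sc}{\omega^2}d\big)^2$;
\item the zeroth-order part $\tfrac{\omega}{2}(ka^2+\ell b^2)$ equals $\tfrac{\omega}{2}h^2+\tfrac{s^2c^2}{\omega}d^2$.
\end{itemize}
This expresses $Q_2$ purely in terms of $h$, $h'$, $d$ and $d'$.

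\textbf{Matching the target.} We split $\big(h'-\tfrac{4sc}{\omega^2}d\big)^2$ as half of $(h')^2$ plus half of $\big(h'+\tfrac{8sc}{\omega^2}d\big)^2$, plus a multiple of $d^2$. The signs must be checked against the lemma, since $d$ here may differ from the paper's $d$ by a sign or a factor; the combination must produce $+8sc/\omega^2$. The identity $x^2=\tfrac12x^2+\tfrac12x^2$ together with $(h'-\lambda d)^2$ versus $(h'+2\lambda d)^2$ leaves a remainder of the form $c_1 h'd+c_2 d^2$. The cross term $h'd$ must cancel. If it does not, this signals that $d$ should be renormalised, or that the cross term should be absorbed via integration by parts using $h$.

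For the $d$-terms, expand
\[
\int \tfrac{2c^4s^2}{\omega^2}\big((d/c)'\big)^2\rho\,dr
\]
and integrate the cross term $dd'$ by parts. The boundary term at $0$ vanishes because of the factor $s^2$. Collecting all remaining $d^2$ coefficients into a single rational function of $s$ should give $\tfrac{4s^2}{\omega^4}(18s^6+39s^4+20s^2)$.

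\textbf{Main obstacle.} The main difficulty is this last bookkeeping step. It requires $\rho'/\rho=-\tfrac34\sinh(2r)$, and expressing everything in $s^2$ via $c^2=1+s^2$ and $\omega=3s^2+2$. I expect the $h'd$ cross term to need careful handling so that the nonnegative coefficient appears. If it fails, I would try the analogue of \eqref{d0-m=1} with $d_0=c$ as the reference function for $d$, which the factor $(d/\cosh r)'$ in the statement suggests.
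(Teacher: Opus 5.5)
\textbf{A sign error in $k'$ breaks the variance step.} Your strategy is the paper's. It consists of the ground-state transformation of Lemma~\ref{lem:pos-eigenvector} with the translation field $(\sqrt2\cosh r,\sinh r)$, then the variance formula \eqref{eq:variance} with weights $k=2c^2/\omega$ and $\ell=s^2/\omega$. After that the square is split into halves, and $d$ is rewritten through $d/c$ before the $d^2$ coefficients are collected. Your $B_3=6\sqrt2\,sc/\omega^2$ is the correct normalisation for Lemma~\ref{lem:pos-eigenvector}; the paper's displayed $12\sqrt2\,sc/\omega^2$ is a slip, and both give the same term $\frac{12s^2c^2}{\omega^2}d^2$.

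The problem is in the variance step. Since $\omega=3c^2-1$,
\[
k'=\frac{4sc\,\omega-2c^2\cdot 6sc}{\omega^2}=-\frac{4sc}{\omega^2}.
\]
This is consistent with $k$ decreasing from $1$ to $2/3$. Hence $ka'+\ell b'=h'+\frac{4sc}{\omega^2}d$, not $h'-\frac{4sc}{\omega^2}d$. You carried the sign over from the $m=1$ case, where the weights are interchanged and $k=s^2/\omega$ is increasing.

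As written, your intermediate expression equals $Q_2-\int_0^\infty\frac{16sc}{\omega^2}h'd\,\rho\,dr$. That extra integral is not zero in general, so your formula is not an identity, and no rearrangement can reach the target. Your fallback remedies therefore cannot work. There is nothing to renormalise, because $d=a-b$ is exactly the $d$ of the statement. Integrating the leftover $h'd$ term by parts cannot repair a false identity either.

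\textbf{With the sign corrected, the rest of your plan goes through.} Set $\lambda=4sc/\omega^2$. Then
\[
(h'+\lambda d)^2=\tfrac12(h')^2+\tfrac12(h'+2\lambda d)^2-\lambda^2d^2 .
\]
There is no cross term, the $+\frac{8sc}{\omega^2}$ comes out automatically, and the only new $d^2$ contribution is $-\frac{16s^2c^2}{\omega^4}$.

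Your deferred bookkeeping also closes. Use $x=s^2$, $c^2=1+x$, $\omega=3x+2$, $\rho'/\rho=-\frac32 sc$ and $\omega'=6sc$. Take the $d^2$ terms produced by rewriting $\frac{2c^2s^2}{\omega^2}(d')^2$ via $(d/c)'$, and add $-\frac{16s^2c^2}{\omega^4}+\frac{12s^2c^2}{\omega^2}-\frac{c^2s^2}{\omega}$. Multiplied by $\omega^4/x$, the total is $72x^3+156x^2+80x$. This is exactly the claimed coefficient $\frac{4s^2}{\omega^4}(18s^6+39s^4+20s^2)$.

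With the one-line correction of $k'$, your argument coincides with the paper's proof.
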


\begin{proof}
	In this proof, we use the convention that $s(r)=\sinh(r)$ and $c(r)=\cosh(r)$ so    
    \begin{align*}
    Q_2(\alpha,\beta)
    = \int_0^\infty
    \pr{
    	\frac{(\alpha')^2+(\beta')^2}{\omega}
    	+\left(-1+\frac{2}{\omega}\right)\alpha^2
    	+\left(
    	-1+\frac{5}{2\omega}
    	+\frac{2}{\omega^2}
    	\right)\beta^2
    	-\frac{12\sqrt2 sc}{\omega^2}\alpha\beta
    }\rho\,dr
    \end{align*}
    As the translational mode $\pr{\tilde{\alpha},\tilde{\beta}} = \pr{\sqrt{2} c, s}$ is a $(-1/2)$-eigenvector of $Q_2$, and hence the associated operator satisfies $(\mathcal{L}-B)(\tilde{\alpha},\tilde{\beta})=(0,0)$, where $\mathcal{L}$, $B$ are as in Lemma \ref{lem:pos-eigenvector} with $B_1 = \left(-1+\frac{2}{\omega}\right)+\frac{1}{2}$, $B_2 = \left(-1+\frac{5}{2\omega}
    	+\frac{2}{\omega^2}\right)+\frac{1}{2}$, $B_3 = \frac{12\sqrt2 sc}{\omega^2}$. 
    As $\tilde{\alpha}$ and $\tilde{\beta}$ are positive on $\mathbb{R}_+$, Lemma \ref{lem:pos-eigenvector} gives
	\begin{align*}
	Q_2(\alpha,\beta)
	=\,&
	\int_0^\infty
	\pr{
		\frac{2c^2}{\omega}(a')^2
		+\frac{s^2}{\omega}(b')^2
		+\frac{12s^2c^2}{\omega^2}a^2
		+\frac{12s^2c^2}{\omega^2}b^2
		-\frac{24s^2c^2}{\omega^2}ab
		-c^2a^2-\frac12s^2b^2
	}\rho\,dr\\
	=\,&
	\int_0^\infty
	\pr{
		\frac{2c^2}{\omega}(a')^2
		+\frac{s^2}{\omega}(b')^2
		+\frac{12s^2c^2}{\omega^2}(a-b)^2
	}\rho\,dr
	-
	\int_0^\infty
	\pr{
		c^2a^2+\frac12s^2b^2
	}\rho\,dr.
	\end{align*}

    Next, we rewrite $Q_2$ using the variance formula \eqref{eq:variance}.
    Using $\frac{s^2}{\omega} + \frac{2c^2}{\omega}=1$ and the same idea as in the proof of Lemma~\ref{lem:mode-1-form} and considering $d:=a-b,$ we obtain 
    \begin{align*}
    Q_2(\alpha,\beta)
    =& \int_0^\infty 
    \pr{\frac{2c^2s^2}{\omega^2}(a'-b')^2
    	+ \pr{h' + \frac{4sc}{\omega^2}(a-b)}^2
    	+ \frac{12s^2c^2}{\omega^2} (a-b)^2
    }\rho\,dr\\
    &- \int_0^\infty 
    \pr{\frac{c^2s^2}\omega(a-b)^2 + \frac\omega 2h^2}\rho\,dr\\
    =& \int_0^\infty
    \pr{\frac{2c^2s^2}{\omega^2}d' 
    	+ \pr{h' + \frac{4sc}{\omega^2}d}^2
    	+ \frac{12s^2c^2}{\omega^2}d^2
    }\rho\,dr
    - \int_0^\infty
    \pr{\frac{c^2s^2}\omega d^2 + \frac\omega 2h^2}\rho\,dr.
    \end{align*}
    Expanding the squared term by writing  
    \begin{align*}
    \pr{h' + \frac{4sc}{\omega^2}d}^2
    = \frac 12(h')^2
    + \frac 12 \pr{h' + \frac{8sc}{\omega^2}d}^2
    - \frac{16s^2c^2}{\omega^4}d^2,
    \end{align*}
    we get
    \begin{align*}
    Q_2(\alpha,\beta)
    =& \frac 12 \int_0^\infty
    \pr{(h')^2-\omega h^2}\rho\,dr
    + \frac 12 \int_0^\infty
    \pr{h' + \frac{8sc}{\omega^2}d}^2\rho\,dr\\
    &+ \int_0^\infty
    \pr{\frac{2c^2s^2}{\omega^2}(d')^2 
    	- \frac{16s^2c^2}{\omega^4}d^2
    	+ \frac{12s^2c^2}{\omega^2}d^2
    	- \frac{c^2s^2}\omega d^2
    }\rho\,dr.
    \end{align*}
    Iterate integrations by parts in a direct way implies
    \begin{equation}\label{m-2-sug}
    \begin{split}
    \int_0^\infty
    \frac{2c^2s^2}{\omega^2}(d')^2\rho\,dr
    = & \int_0^\infty
    \pr{\frac{2c^4s^2}{\omega^2}\pr{\pr{\frac dc}'}^2
    	+ \frac{2s^4}{\omega^2} d^2
    }\rho\,dr\\
    &+ \int_0^\infty \frac{d^2}{c^2}
    \pr{
    	\frac{2c^3s^3}{\omega^2}\cdot \frac 32sc
    	- \frac{6c^2s^4+6c^4s^2}{\omega^2}
    	+ \frac{4c^3s^3}{\omega^3}\cdot 6sc
    }\rho\,dr.
    \end{split}
    \end{equation}
    Thus, the coefficients of $d^2$ combine to
    \begin{align*}
    &\, \frac{2s^4}{\omega^2}
    + \frac 1{c^2} \pr{
    	\frac{2c^3s^3}{\omega^2}\cdot \frac 32sc
    	- \frac{6c^2s^4+6c^4s^2}{\omega^2}
    	+ \frac{4c^3s^3}{\omega^3}\cdot 6sc
    }
    - \frac{16s^2c^2}{\omega^4}
    + \frac{12s^2c^2}{\omega^2}
    - \frac{c^2s^2}\omega \\
    =&\, \frac{4s^2}{\omega^4}
    \pr{18s^6+39s^4+20s^2}.
    \end{align*}
    Thus, we finally obtain 
    \begin{align*}
    Q_2(\alpha,\beta)
    =&\, \frac 12 \int_0^\infty
    \pr{(h')^2-\omega h^2}\rho\,dr
    + \frac 12 \int_0^\infty
    \pr{h' + \frac{8sc}{\omega^2}d}^2\rho\,dr\\
    &+ \int_0^\infty
    \frac{2c^4s^2}{\omega^2}\pr{\pr{\frac dc}'}^2\rho\,dr
    + \int_0^\infty
    \frac{4s^2}{\omega^4}
    \pr{18s^6+39s^4+20s^2} d^2\rho\,dr.
    \end{align*}
    This finishes the proof of the lemma.
\end{proof}

The non-negativity of $Q_2$ subject to the orthogonality condition is then reduced to the following Sturm--Liouville type problem.

\begin{lemma}
\label{lem:mode-2-SL}
    Let $f$ be an even function in $H^1_W(\bb R).$ 
    If $\int_0^\infty 
    f\omega 
    \rho \,dr=0,$
    then 
    \begin{align*}
    \int_0^\infty
    f^2 \omega\rho\,dr
    \le \int_0^\infty (f')^2 \omega\rho \,dr.
    \end{align*}
\end{lemma}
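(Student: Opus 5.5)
The plan is to follow the proof of Lemma~\ref{lem:mode-1-SL}, this time for the operator
\[
L_2 f := \frac{1}{\rho\omega}\pr{\rho\omega f'}',
\]
which is self-adjoint on even functions with respect to $\omega\rho\,dr$ on $(0,\infty)$ with a Neumann condition at $0$. The constants are its first even eigenfunctions, with eigenvalue $0$. The claimed inequality says exactly that the next even eigenvalue $\lambda$ of $-L_2$ satisfies $\lambda\ge 1$. Once that is known, expanding $f$ (orthogonal to $1$ in $L^2_{\omega\rho}$) in eigenfunctions and integrating by parts, justified by the cut-off remark after \eqref{eq:def-HW}, gives $\int (f')^2\omega\rho \ge \int f^2\omega\rho$.

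First I would look for an explicit even $h_2$ with exactly one zero on $\bb R_+$ and $-L_2h_2=h_2$. The natural source is the quantity $h$ of Lemma~\ref{lem:mode-2-form} evaluated on a geometric field, as in mode $1$. Mode $2$ contains only translational Jacobi-type fields, so I would first test simple ansätze such as $\cosh(2r)+k$ and $\omega+k$.

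A direct computation with $C=\cosh(2r)$ and $\omega=(3C+1)/2$ gives
\[
-L_2(C+k) = \tfrac32(C^2-1) - 4C - \frac{12(C^2-1)}{3C+1}.
\]
This is not a multiple of $C+k$, so the eigenvalue $1$ is probably not attained by an elementary function. I therefore expect to need a comparison argument rather than an exact eigenfunction. The Brascamp--Lieb/Bakry--\'Emery route on the full line also fails: writing $\omega\rho=e^{-V}$, one gets $V''(0)=-\tfrac32<0$.

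The route I would commit to is the derivative (Witten-Laplacian) trick. If $-L_2 f=\lambda f$ with $f$ even, then $g=f'$ is odd, vanishes at $0$, and satisfies the differentiated equation
\[
-\pr{\frac{(\rho\omega g)'}{\rho\omega}}' = \lambda g .
\]
Hence $\lambda$ is at least the bottom of the spectrum of the operator
\[
T g := -\pr{\frac{(\rho\omega g)'}{\rho\omega}}'
\]
on $(0,\infty)$ with Dirichlet condition at $0$. This operator is symmetric with respect to $\rho\omega\,dr$ after the substitution $g=\tilde g/(\rho\omega)$; equivalently one can work with the weight $(\rho\omega)^{-1}$ and integrate by parts directly.

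To bound the bottom of $T$, I would use Barta's trick with a positive test function $w$ on $(0,\infty)$ vanishing at $0$. Natural candidates are $w=\sinh(2r)=\tfrac13\omega'$ or $w=\sinh(r)\cosh(r)$. The ground-state substitution $g=w\,q$, exactly as in Lemma~\ref{lem:pos-eigenvector}, gives
\[
\int g\,Tg \;\ge\; \inf_{r>0}\frac{Tw}{w}\int g^2
\]
in the appropriate weight. So it suffices to check pointwise that $Tw/w\ge1$, which becomes a rational inequality in $C=\cosh(2r)\ge1$ after clearing the factor $(3C+1)^2$.

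I expect this pointwise inequality to be the main obstacle. The weight is nonconvex near $r=0$, so the bound is tight there, and I may need to tune $w$, for instance $w=\sinh(2r)(1+\epsilon\cosh(2r))$ or $w=\omega'\rho^{-\delta}$. The fallback is to split $(0,r_0)\cup(r_0,\infty)$, where at large $r$ the term $\tfrac32\sinh^2(2r)$ dominates easily.

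For the equality case I would note that the inequality is strict unless $g$ is a multiple of $w$. I expect this not to be an eigenfunction, so the inequality in Lemma~\ref{lem:mode-2-SL} should in fact be strict for nonzero $f$. That strictness is consistent with the rigidity needed later for $Q_2$ via \eqref{eq:mode-2-rewritten}.
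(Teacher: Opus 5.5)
\textbf{The gap.} The decisive step of your plan is to exhibit a positive $w$ on $(0,\infty)$ with $Tw\ge w$. You never carry this out, and every candidate you name fails it. With $V=-\log(\rho\omega)$ and $C=\cosh(2r)$ one has $Tw=-w''+V'w'+V''w$. For $w=\sinh(2r)$ (equivalently $\sinh r\cosh r$) this gives
\[
\frac{Tw}{w}=-4+3C-\frac{12C}{\omega}+\frac{9\sinh^2(2r)}{\omega^2},
\]
which equals $-7$ at $r=0$ and is still below $1$ at $C=3$.

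More generally, for odd $w=ar+br^3+O(r^5)$ with $a>0$, one gets $Tw/w\to-3-6b/a$ as $r\to0$. So a bound $Tw/w\ge 1$ forces $b/a\le-\tfrac23$, i.e.\ strong concavity at the origin. Your tunings move the wrong way:
\begin{itemize}
\item $\sinh(2r)(1+\epsilon\cosh(2r))$ is positive on $(0,\infty)$ only for $\epsilon\ge 0$, and then $b/a\ge\tfrac23$.
\item $\omega'\rho^{-\delta}$ needs $\delta\le-\tfrac{16}{9}$ at the origin, but then $Tw/w\sim-\tfrac{9}{16}|\delta|(|\delta|+1)\cosh^2(2r)\to-\infty$ at infinity.
\end{itemize}
Splitting into $(0,r_0)\cup(r_0,\infty)$ is not an argument as stated. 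Barta's bound needs a single global positive supersolution, or a localization whose error terms you actually control. So the proposal does not yet prove anything.

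Your remark that the bound is ``tight near $r=0$'' is also misleading. The stated inequality in fact holds with constant $2$ (see below); the negativity of $V''(0)$ only defeats your test functions.

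\textbf{The missing observation.} The factor $\omega$ in the derivative term can simply be discarded. Since $\omega=\tfrac12(3\cosh(2r)+1)\ge 2$, it suffices to prove
\[
\int_0^\infty f^2\omega\rho\,dr\le\int_0^\infty (f')^2\rho\,dr .
\]
This is also the form actually needed for $Q_2$, since \eqref{eq:mode-2-rewritten} contains $\tfrac12\int_0^\infty((h')^2-\omega h^2)\rho\,dr$. It is what the paper's proof establishes, using the operator $L_2f=\frac{1}{\rho\omega}(\rho f')'$.

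For this operator your very first ansatz works. Delete the term $-\frac{12(C^2-1)}{3C+1}$ from your computation (it came from $\omega'/\omega$). What remains is
\[
\tfrac32(C^2-1)-4C=\omega\,(C-3),
\]
so $h_2=\cosh(2r)-3$ satisfies $-L_2h_2=h_2$. It has exactly one zero on $(0,\infty)$, so it is the second even eigenfunction with eigenvalue $1$. Orthogonality to constants plus integration by parts, as you already outlined, then gives the inequality with $\int(f')^2\rho$ on the right. The displayed statement follows from $\omega\ge 1$, and in fact with constant $2$ from $\omega\ge 2$.
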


\begin{proof}
	Consider the function
	\begin{align*}
	h_2:=\cosh(2r)-3
	\end{align*}
	and the operator $L_2$ defined by
	\begin{align*}
	L_2 f:= \frac{1}{\rho\omega} \pr{\rho f'}'.
	\end{align*}
	Then straightforward calculations imply
	\begin{align*}
	L_2h_2
	= \frac{1}{\omega} \pr{4\cosh(2r) - \frac 32 \sinh^2(2r)}
    = -h_2.
	\end{align*}
	Since $h_2$ has exactly one zero on the positive real line, it follows that it is the second even eigenfunction of $L_2$ with the second eigenvalue $1.$ 
	As a result, since the function $f$ is orthogonal to the first eigenfunction $1$ with respect to the weight $\omega\rho\,dr,$ it follows that
	\begin{align*}
	-\int_0^\infty (fL_2f) \rho\omega\,dr
	\ge \int_0^\infty f^2 \rho\omega\,dr.
	\end{align*}
	Integration by parts implies the desired inequality based on the definition of $L_2.$
\end{proof}

Combining Lemmas~\ref{lem:mode-1-form} and~\ref{lem:mode-1-SL} gives us the desired estimate when $m=2.$
We summarize it in the following proposition.

\begin{prop}
    \label{prop:mode-2-analysis}
    Let $\alpha$ be an even function and $\beta$ be an odd function in $H^1_W(\bb R)$ such that
    \begin{align}\label{eq:m=2-assumption}
        \int_0^\infty
        \pr{\sqrt 2\cosh(r)\alpha(r) + \sinh(r)\beta(r)}
        \rho\,dr=0,
    \end{align}
    Then $Q_2(\alpha,\beta)\ge 0,$ with equality only if $(\alpha,\beta)=(0,0)$. 
\end{prop}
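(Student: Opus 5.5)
The argument mirrors the proof of Proposition~\ref{prop:mode-1-analysis}, now using Lemmas~\ref{lem:mode-2-form} and~\ref{lem:mode-2-SL}. Let
\[
h=\frac{\sqrt2\cosh(r)}{\omega}\alpha+\frac{\sinh(r)}{\omega}\beta
\qquad\text{and}\qquad
d=\frac{\alpha}{\sqrt2\cosh(r)}-\frac{\beta}{\sinh(r)},
\]
as in Lemma~\ref{lem:mode-2-form}. Then $h\omega=\sqrt2\cosh(r)\alpha+\sinh(r)\beta$, so the orthogonality assumption \eqref{eq:m=2-assumption} reads exactly $\int_0^\infty h\omega\rho\,dr=0$.

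Since $\alpha$ is even and $\beta$ is odd, $h$ is even. I also need $h\in H^1_W(\bb R)$, or at least enough integrability to apply Lemma~\ref{lem:mode-2-SL}. This should follow from $\alpha,\beta\in H^1_W$, because the coefficients $\cosh(r)/\omega$, $\sinh(r)/\omega$ and their derivatives are bounded. If the weight $\omega\rho$ in $\int(h')^2\omega\rho$ causes trouble, a cutoff approximation argument should handle it.

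The key observation is that the first term in \eqref{eq:mode-2-rewritten} is not directly covered by Lemma~\ref{lem:mode-2-SL}: that term is $\frac12\int\bigl((h')^2-\omega h^2\bigr)\rho$, with unweighted gradient $(h')^2\rho$, while the lemma has $(f')^2\omega\rho$. However, $\omega\ge 1$. I would therefore first combine the first two square terms via the elementary inequality
\[
\tfrac12 x^2+\tfrac12 (x+y)^2\ \ge\ \tfrac14 y^2 \quad\text{(not needed)}
\]
and, more usefully, apply Lemma~\ref{lem:mode-2-SL} in a sharper form. The operator $L_2f=\frac{1}{\rho\omega}(\rho f')'$ in that lemma corresponds to the Dirichlet form $\int (f')^2\rho\,dr$, not $\int (f')^2\omega\rho$, so the integration by parts in its proof actually yields
\[
\int_0^\infty f^2\omega\rho\,dr\le\int_0^\infty (f')^2\rho\,dr .
\]
That is precisely what the first term needs. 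I would note that the displayed statement of the lemma has an extra $\omega$, but its proof gives the stronger inequality, and this is the version I will use. This step is the main obstacle: matching the weights correctly. If this reading were wrong, I would instead verify directly that $h_2=\cosh(2r)-3$ is the second even eigenfunction of the form $\int(h')^2\rho$ relative to $\int h^2\omega\rho$, which is exactly the computation $L_2h_2=-h_2$. So the first term is nonnegative, and the remaining three terms are manifestly nonnegative, giving $Q_2\ge0$.

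For the equality case, suppose $Q_2(\alpha,\beta)=0$. Then the last term vanishes, and since its coefficient $\frac{4s^2}{\omega^4}(18s^6+39s^4+20s^2)$ is positive for $r>0$, this forces $d\equiv0$ on $(0,\infty)$. The second term then gives $h'\equiv 0$, so $h$ is constant. The orthogonality condition $\int h\omega\rho=0$ then forces $h\equiv0$. Finally, $d=0$ and $h=0$ form an invertible linear system for $(\alpha,\beta)$ at each $r>0$: its determinant is a nonzero multiple of $\frac{s^2+2c^2}{\omega}=1$. Hence $\alpha=\beta=0$ on $\bb R_+$, and by parity on all of $\bb R$.
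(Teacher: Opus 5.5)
Your proposal is correct and follows the paper's proof. You rewrite $Q_2$ via Lemma~\ref{lem:mode-2-form} and read the orthogonality condition as $\int_0^\infty h\omega\rho\,dr=0$, so that Lemma~\ref{lem:mode-2-SL} controls the first term; in the equality case you deduce $d\equiv 0$, then $h'\equiv 0$, then $h\equiv 0$, hence $(\alpha,\beta)=(0,0)$. Your remark on the weights is also right: this step needs $\int_0^\infty \omega h^2\rho\,dr\le\int_0^\infty (h')^2\rho\,dr$, which is exactly what the proof of Lemma~\ref{lem:mode-2-SL} establishes (since $-\int_0^\infty fL_2f\,\omega\rho\,dr=\int_0^\infty (f')^2\rho\,dr$), so the extra factor $\omega$ on the right-hand side of that lemma's displayed statement is a typo that makes the statement too weak for this application.
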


\begin{proof}
    As in Lemma~\ref{lem:mode-2-form}, let $h(r):=\frac{\sqrt 2\cosh(r)}{\omega(r)}\alpha(r) + \frac{\sinh(r)}{\omega(r)}\beta(r)$. The assumption~\eqref{eq:m=2-assumption} means $\int_0^\infty h \omega\rho\,dr=0,$ so Lemma~\ref{lem:mode-2-SL} implies that the first term in the formula \eqref{eq:mode-2-rewritten} for $Q_2$ is nonnegative. The remaining terms are all square terms, and so $Q_2((\alpha,\beta))\geq 0$. 

    Moreover, if $Q_2(\alpha,\beta)=(0,0),$ then the square terms must vanish, namely $d\equiv 0$, so $h'\equiv 0$. 
    Then $\int_0^\infty h \omega\rho\,dr=0$ implies that $h\equiv 0$, so $(\alpha,\beta)=(0,0)$. 
\end{proof}

\section{\bf Mode $m=0$}
\label{sec:0-mode}

We will keep using the convention in \eqref{convention-omega-rho}.

\subsection{$\theta$ mode}
\label{sec:mode-theta}

In this section, we consider the form $Q^\theta_0$, which is given in Proposition~\ref{prop:form-fourier} by 
\[Q^\theta_{0}(\alpha)= \int_0^\infty \left( \omega^{-1} (\alpha')^2 - \alpha^2 \right) \rho dr \]
for even functions $\alpha \in H^1_W(\mathbb{R})$. We will show the non-negativity of $Q^\theta_0$ for even $\alpha$ that are orthogonal to the $(-1)$-eigenfunction $\phi_0=1$ corresponding to dilation; see Table~\ref{tab:evectors}.   

\begin{prop}
    \label{prop:mode-0-theta-analysis}
    Let $\alpha$ be an even function in $H^1_W(\mathbb{R})$ and suppose that $\int_0^\infty \alpha \rho\, dr=0$. Then 
    \[Q^\theta_0(\alpha)\geq \int_0^\infty \frac{1}{\omega(\omega+1)} (\alpha')^2.\] 
    In particular, $Q^\theta_0(\alpha)\geq 0$ with equality only if $\alpha\equiv 0$.
\end{prop}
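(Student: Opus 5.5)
The plan is to deduce the proposition from a Brascamp--Lieb (weighted Poincar\'e) inequality for the log-concave weight $\rho=e^{-V}$ on $\bb R$, where $V=(3\cosh(2r)-1)/8$. Since
\[
\frac1\omega-\frac1{\omega(\omega+1)}=\frac{1}{\omega+1},
\]
the claimed estimate is equivalent to the weighted Poincar\'e inequality
\begin{align*}
\int_0^\infty \alpha^2\rho\,dr\le \int_0^\infty \frac{(\alpha')^2}{\omega+1}\rho\,dr
\qquad\text{for even }\alpha\text{ with }\int_0^\infty\alpha\rho\,dr=0 .
\end{align*}
I read the last integral in the statement as carrying the weight $\rho$, as every other integral in this section does. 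By evenness, the integrals over $\bb R_+$ may be replaced by integrals over $\bb R$.

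\textbf{Direct attempt and reparametrization.} Applied directly, the Brascamp--Lieb inequality gives $\mathrm{Var}_\rho(\alpha)\le\int (\alpha')^2 (V'')^{-1}\rho$, and here $V''=\tfrac32\cosh(2r)=\omega-\tfrac12$. This is slightly too weak, since $\omega-\tfrac12<\omega+1$. I would therefore change variables $t=t(r)$ with $dt/dr=\varphi=e^{\psi}>0$ and $\psi$ even. Then $\rho\,dr=e^{-W}dt$ with $W=V+\psi$, the orthogonality condition is unchanged, and Brascamp--Lieb in $t$ gives
\[
\mathrm{Var}\le\int \frac{(\alpha')^2}{\varphi^2W_{tt}}\rho\,dr .
\]
A computation shows
\[
\varphi^2W_{tt}=V''+\psi''-\psi'(V'+\psi'),
\]
so it suffices to choose $\psi$ with
\[
\psi''-\psi'V'-(\psi')^2\ge \tfrac32,
\]
which also ensures $W$ is convex.

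\textbf{Choosing $\psi$.} This is the main obstacle. Near $r=0$ one needs $\psi''(0)\ge\tfrac32$. At infinity one needs $-\psi'(V'+\psi')$ not to become very negative, which pushes $\psi'$ toward $0$ or toward slightly negative values. I would first try bounded choices such as $\psi'=\lambda\tanh(r)/\cosh(2r)^{k}$, or $\psi'=\lambda\,\sinh(r)/\omega$, and tune the constants. If no such one-parameter family works, the fallback is to use the evenness of $\alpha$ more strongly. For even functions orthogonal to constants, the relevant comparison is with the second even eigenvalue of the weighted operator. As in Lemmas~\ref{lem:mode-1-SL} and~\ref{lem:mode-2-SL}, I would look for an explicit eigenfunction with a single zero on $\bb R_+$ (e.g.\ of the form $\cosh(2r)-c$) for the operator $f\mapsto \rho^{-1}(\rho f'/(\omega+1))'$, or for a slightly modified operator with a larger weight. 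If the resulting eigenvalue is at least $1$, the inequality follows by the same Sturm--Liouville argument.

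\textbf{Rigidity.} Once the inequality holds, $Q_0^\theta(\alpha)\ge\int (\alpha')^2\,\omega^{-1}(\omega+1)^{-1}\rho\ge0$. If $Q^\theta_0(\alpha)=0$, the right-hand side forces $\alpha'\equiv0$, so $\alpha$ is constant, and $\int_0^\infty\alpha\rho\,dr=0$ gives $\alpha\equiv0$. The justification of the integrations by parts for $H^1_W$ functions follows the cut-off argument of Remark~\ref{rmk:weighted-spaces}.
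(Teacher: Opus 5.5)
There is a genuine gap: the route you propose for the key inequality cannot work, and the fallback is not a proof. Your reduction to $\int\alpha^2\rho\le\int(\alpha')^2(\omega+1)^{-1}\rho$ is fine, and so is your rigidity step once that inequality is granted.

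\textbf{The change of variables fails.} With $\psi$ even, $g:=\psi'$ is odd, so $g(0)=0$. Your pointwise condition $g'-gV'-g^2\ge\frac32$ then gives $g'(0)\ge\frac32$. On any interval $(0,r_1)$ where $g>0$ we have $gV'\ge0$, hence $g'\ge\frac32+g^2>0$. So $g$ never returns to zero. Then $\frac{d}{dr}\bigl[\sqrt{2/3}\,\arctan(\sqrt{2/3}\,g)\bigr]\ge1$ forces $g$ to blow up before $r=\pi/\sqrt6$. Thus no admissible $\psi$ exists, in particular none in the families you list; this is not a tuning problem.

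The structural reason is that Brascamp--Lieb on $\mathbb{R}$ after such a change of variables does not see parity. It would give the inequality for every $\alpha$ with $\int_{\mathbb{R}}\alpha\rho=0$, including odd ones, and there it is false. Indeed $\omega+1=3\cosh^2 r$, so $\alpha=\sinh r$ gives $\int\frac{(\alpha')^2}{\omega+1}\rho=\frac13\int\rho$. On the other hand, $\int\sinh^2 r\,\rho=\frac12\bigl(K_1(\tfrac38)/K_0(\tfrac38)-1\bigr)\int\rho\approx0.51\int\rho$. So the evenness of $\alpha$ must enter essentially. If you instead apply Brascamp--Lieb on the half-line, which is legitimate for even $\alpha$, and allow $\psi'(0)<0$, the Riccati obstruction no longer applies. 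But then $u=e^{-\psi}$ must be a positive solution of $u''-V'u'+\frac32u\le0$ on $[0,\infty)$, a non-explicit object you have not produced.

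\textbf{The fallback does not close the gap.} The function $\cosh(2r)-c$ is not an eigenfunction of $f\mapsto\rho^{-1}(\rho f'/(\omega+1))'$, which sends it to $\frac43\operatorname{sech}^2 r-2\sinh^2 r$. No explicit eigenfunction is apparent. The Rayleigh quotient of $\cosh(2r)-c$ is only about $1.17$, so a comparison argument would need real work that you have not outlined.

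\textbf{The missing idea.} The paper uses a duality argument after Helffer, and this is exactly where parity is used. Since $\int_{\mathbb{R}}\alpha\rho=0$, one can solve $\rho^{-1}(\rho u')'=\alpha$. Then $u$ is even, so $h:=u'$ is odd and $\alpha=h'-V'h$. Integration by parts gives
\[
\int\alpha^2\rho=\int\bigl((h')^2+V''h^2\bigr)\rho=-\int\alpha' h\,\rho .
\]
Because $h$ is odd, it has $\rho$-mean zero. So Brascamp--Lieb applied to $h$, together with $V''\ge\frac32$, gives $\int(h')^2\rho\ge\frac32\int h^2\rho$. Hence $\int\alpha^2\rho\ge\int(V''+\frac32)h^2\rho$. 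Cauchy--Schwarz on $-\int\alpha'h\rho$ then yields $\int\alpha^2\rho\le\int\frac{(\alpha')^2}{V''+3/2}\rho=\int\frac{(\alpha')^2}{\omega+1}\rho$. The extra $\frac32$ beyond the naive weight $V''=\omega-\frac12$ is a spectral gap for the odd function $h$, so it comes precisely from the evenness of $\alpha$.
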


\begin{proof}
For convenience, we will write $\phi=\frac{3\cosh(2r)-1}{8}$ so that $\rho=e^{-\phi}$. 
Note that $\omega = \frac{3\cosh(2r)+1}{2} = \phi''+\frac{1}{2}$. Denote the Sturm-Liouville operator $L^\theta_0 u=\frac{1}{\rho}(\rho u')' = u'' - \phi' u'$ (which is self-adjoint on $L^2_\rho(\mathbb{R})$).
The proof is inspired by Helffer's proof of the Brascamp-Lieb inequality. 

Consider odd functions $h \in H^1_\rho(\mathbb{R})$ and let $f = h' - h\phi'$, so that $f' = h'' - h'\phi' - h\phi'' = L^\theta_0 h' - h \phi''$. We claim that 
\begin{equation}
    \label{eq:h-weighted-L2}
\int_\mathbb{R} h^2(\phi'' + \kappa) \rho \leq \int_{\mathbb{R}} ((h')^2 + h^2 \phi'')\rho \leq \int_\mathbb{R} \frac{(f')^2}{\phi''+\kappa}\rho,
\end{equation}
so long as the last integral is finite. 
By approximation, it is enough to prove the claim for smooth $h$ of compact support.

Let \[I:= \int_{\mathbb{R}} ((h')^2 + h^2 \phi'')\rho = \int_\mathbb{R}  (-hL^\theta_0 h' + h^2 \phi'')\rho=  -\int_\mathbb{R} f' h\rho .\]
In fact, as $h$ is odd, we certainly have $\int_\mathbb{R} h\rho=0$, so the Brascamp-Lieb inequality (\cite{BL76}; see also \cite[Theorem 2.1]{He98}) gives 
\[\int_\mathbb{R} \frac{(h')^2}{\phi''}\rho \geq  \int_\mathbb{R} h^2 \rho.\]
Then since $\phi'' = \frac{3}{2}\cosh(2r) \geq \frac{3}{2}$, we see that\footnote{One could also use a Poincar\'{e} inequality for the weighted space $(\mathbb{R},e^{-\phi})$.} 
\begin{equation}
    \label{eq:mode-0-1}
    I\geq \int_\mathbb{R}  h^2(\kappa + \phi'')\rho
\end{equation}
holds with $\kappa :=3/2$. On the other hand, using Cauchy-Schwarz we have 
\[
I =-\int_\mathbb{R} f' h\rho \leq \left(\int_\mathbb{R} \frac{(f')^2}{\phi''+\kappa} \rho \right)^{1/2}\left( \int_\mathbb{R}h^2(\phi''+\kappa) \rho\right)^{1/2} \leq \left(\int_\mathbb{R} \frac{(f')^2}{\phi''+\kappa} \rho \right)^{1/2} I^{1/2}.
\]
Dividing through by $I^{1/2}$ implies the claim. 

To proceed, we solve $L^\theta_0 u=\alpha$; as $\alpha\in L^2_\rho(\mathbb{R})$ is even, the solution $u\in H^2_\rho(\mathbb{R})$ is necessarily even. 
Define $h=u' \in H^1_\rho(\mathbb{R})$, so that $\alpha = \frac{1}{\rho}(h\rho)' = h'-h\phi$. By assumption $\alpha' \in L^2_{\rho/\omega}(\mathbb{R})$, so we may apply the claim with $f=\alpha$; using \eqref{eq:h-weighted-L2} and recalling $\omega = \phi''+\frac{1}{2}$, it follows that $\alpha' h \in L^1_\rho(\mathbb{R})$. Since $\alpha\in L^2_\rho(\mathbb{R})$, $h\in H^1_\rho(\mathbb{R})$, we may thereby justify the following integrations by parts:
\[\begin{split} 
I&:= \int_\mathbb{R}  ((h')^2 + h^2 \phi'')\rho=\int_\mathbb{R}  (-hL^\theta_0 h' + h^2 \phi'')\rho  =  -\int_\mathbb{R} \alpha' h\rho 
= \int_\mathbb{R} \alpha(h\rho)' = \int_\mathbb{R} \alpha^2\rho.
\end{split}\]
Again as $\omega= \phi''+\frac{1}{2}$, we conclude from \eqref{eq:h-weighted-L2} with $f=\alpha$ that \[I= \int_\mathbb{R} \alpha^2\rho \leq \int_\mathbb{R} \frac{(\alpha')^2}{\phi''+\kappa} = \int_\mathbb{R} \frac{1}{\omega+1} (\alpha')^2.\]
Using this inequality in the definition of $Q^\theta_0$ completes the proof.
\end{proof}

\subsection{$r$ mode}

In this section, we prove the non-negativity of the form $Q^r_0$, which is given in Proposition~\ref{prop:form-fourier} by 
\[Q^r_0(\beta)= \int_0^\infty \left( \omega^{-1} (\beta')^2 + \left(-1 +\frac{1}{2}\omega^{-1} + 2\omega^{-2}\right)\beta^2\right) \rho dr,\]
for odd functions $\beta \in H^1_W(\mathbb{R})$. Recall that in this mode we have the 0-eigenfunction $\psi_0=\sinh(2r)$ corresponding to the rotation generated by $V_{34}^\perp$; see Table~\ref{tab:evectors}.  

\begin{prop}
    \label{prop:mode-0-r-analysis}
    Let $\beta$ be an odd function in $H^1_W(\mathbb{R})$. Then $Q^r_0(\beta)\geq 0$, with equality if and only if $\beta = C\sinh(2r)$ for some constant $C\in\mathbb{R}$. 
\end{prop}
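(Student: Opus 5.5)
The plan is a ground-state transformation, as in Section~\ref{sec:34-mode}, using the explicit Jacobi field $\psi_0=\sinh(2r)$. This field is odd and positive on $\bb R_+$. By Table~\ref{tab:evectors} it is a $0$-eigenfunction of the operator associated with $Q^r_0$, namely
\[
\frac{1}{\rho}\pr{\frac{\rho}{\omega}\psi_0'}' = \pr{-1+\frac{1}{2\omega}+\frac{2}{\omega^2}}\psi_0 .
\]
I would first verify this identity directly. It is a short computation using $\omega=\frac{3\cosh(2r)+1}{2}$ and $\rho'/\rho=-\frac34\sinh(2r)$. It also serves as a consistency check on the zeroth-order coefficient of $Q^r_0$.

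Next, write $\beta=\sinh(2r)\,b$ and apply the scalar version of Lemma~\ref{lem:pos-eigenvector}. Concretely, take the $\beta$-component alone, with $B_3=0$ and $B_2$ the coefficient above. The same integration by parts, based on
\[
\pr{\frac{\beta^2}{\psi_0}\psi_0'\,\frac{\rho}{\omega}}' ,
\]
should give
\[
Q^r_0(\beta)=\int_0^\infty \frac{\sinh^2(2r)}{\omega}\,(b')^2\,\rho\,dr\ \ge 0 .
\]

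The only points needing care are the boundary terms. At $r=0$, $\beta$ is odd, so $\beta^2/\psi_0\to 0$ while $\psi_0'$ stays bounded, and the term vanishes. At infinity, $\rho$ decays like $e^{-\frac{3}{16}e^{2r}}$, which beats every exponential growth. So I would first prove the identity for smooth odd $\beta$ with compact support, and then pass to general odd $\beta\in H^1_W(\bb R)$ by the cutoff and dominated convergence argument of Remark~\ref{rmk:weighted-spaces}. For the limit I would use lower semicontinuity of the right-hand side, or equivalently apply the identity directly on $[0,R]$ and control the boundary term at $R$.

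For the equality case, $Q^r_0(\beta)=0$ forces $b'\equiv0$ on $\bb R_+$, since the weight $\sinh^2(2r)\rho/\omega$ is positive there. Hence $b$ is constant, and by oddness $\beta=C\sinh(2r)$ on all of $\bb R$. Conversely, $\sinh(2r)\in H^1_W(\bb R)$ because of the super-exponential decay of $\rho$, and it gives $Q^r_0=0$.

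The main obstacle I expect is the justification of the limiting argument near $r=0$. There $b=\beta/\sinh(2r)$ is only controlled through $\beta(0)=0$ and the $H^1$ regularity of $\beta$, so the boundary term $\beta^2\psi_0'/\psi_0$ at the origin must be shown to vanish for general $H^1_W$ functions. I would handle this by density of smooth odd functions, for which $\beta^2/\sinh(2r)=O(r)$.
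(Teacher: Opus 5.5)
Your proposal is correct. It reaches the statement by a different mechanism from the paper, although both rest on the same input: the positive Jacobi field $\psi_0=\sinh(2r)$.

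The paper argues spectrally. Since $L^r_0\psi_0=0$ and $\psi_0>0$ on $\mathbb{R}_+$, it asserts that $\psi_0$ is the first odd eigenfunction, so all other eigenvalues are positive. This is quick, but it tacitly uses Sturm--Liouville facts that the paper does not verify in this weighted, noncompact setting: discreteness of the spectrum on odd functions in $L^2_\rho$, and the fact that a positive eigenfunction is the simple ground state.

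You instead apply the scalar case of Lemma~\ref{lem:pos-eigenvector}, exactly as the paper does for $m=3,4$. This gives the exact identity $Q^r_0(\beta)=\int_0^\infty \frac{\sinh^2(2r)}{\omega}(b')^2\rho\,dr$, which yields nonnegativity and the equality case at once, with no spectral theory.

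The boundary bookkeeping is also lighter than you expect. At the origin, $\beta(0)=0$ and Cauchy--Schwarz give $\beta(r)^2\le r\int_0^r(\beta')^2\,dr$. Hence $\beta^2\psi_0'/\psi_0\to 0$ for every odd $\beta\in H^1_W(\mathbb{R})$, with no density argument needed. At $r=R$, the boundary term $\frac{\rho}{\omega}\frac{\psi_0'}{\psi_0}\beta^2$ is nonnegative because $\psi_0'/\psi_0=2\coth(2r)>0$. So the identity on $[0,R]$ already gives $Q^r_0(\beta)\ge\int_0^\infty\frac{\sinh^2(2r)}{\omega}(b')^2\rho\,dr$ after letting $R\to\infty$, using dominated convergence on the left and monotone convergence on the right.

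Your consistency check is worth keeping. The computation confirms that $\sinh(2r)$ is a Jacobi field for the potential $-1+\frac{1}{2\omega}+\frac{2}{\omega^2}$, as in Section~\ref{sec:0-mode} and in \eqref{eq:Q-m} with $m=0$. It is not a Jacobi field for the potential $-1+\frac{2}{\omega^2}$ displayed in Proposition~\ref{prop:form-fourier}, which drops the $\frac{1}{2\omega}$ term.
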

\begin{proof}

Consider the associated Sturm-Liouville operator 
\[L^r_0 f:= \frac{1}{\rho} \left( \frac{\rho}{\omega} f' \right)' - \left(-1 +\frac{1}{2}\omega^{-1} + 2\omega^{-2}\right)f\]
so that $Q^r_0(\beta) = -\int_0^\infty (\beta L^r_0\beta)\rho\,dr$.
Note $\psi_0 = \sinh(2r)$ is a 0-eigenfunction of $Q^r_0$, hence of the associated operator, $L^r_0 \psi_0=0$.  
Because $\sinh$ is positive on the positive real line, $\psi_0$ must be the first odd eigenfunction of $L^r_0$. 
In particular, all the remaining eigenvalues must be strictly positive, and the proposition follows. 
\end{proof}

\section{\bf Stability of the M\"{o}bius shrinker}
\label{sec:final}

This section contains the proof of the main theorem. 
It uses analyses at each Fourier mode detailed in the previous sections.
As a corollary of the rigidity cases in the mode analysis, a complete list of weakly unstable modes also follows.
We end with a few remarks.

\subsection{Higher modes}
The final ingredient of the proof of the main theorem is an estimate for higher modes.
It follows directly from the nature of the quadratic form defined in \eqref{eq:Q-m}.

\begin{prop}
    \label{prop:form-mono}
    Let $\alpha,\beta\in H^1_W(\bb R).$
    For any $m\geq 3$ we have $(Q_m - Q_{m-1})((\alpha,\beta))\geq 0$, with equality if and only if $(\alpha,\beta)=(0,0)$. 
\end{prop}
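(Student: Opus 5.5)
Subtracting the two instances of \eqref{eq:Q-m} kills the gradient terms and the $m$-independent potentials. Using $m^2-(m-1)^2 = 2m-1$, we get
\[
(Q_m-Q_{m-1})(\alpha,\beta) = \int_0^\infty \left[ \frac{2m-1}{2\omega}\bigl(\alpha^2+\beta^2\bigr) - \frac{3\sqrt2\,\sinh(2r)}{\omega^2}\,\alpha\beta \right]\rho\,dr .
\]
The integrand is a pointwise quadratic form in $(\alpha,\beta)$, so it suffices to show that the symmetric matrix
\[
\begin{pmatrix} \frac{2m-1}{2\omega} & -\frac{3\sqrt2\sinh(2r)}{2\omega^2} \\ -\frac{3\sqrt2\sinh(2r)}{2\omega^2} & \frac{2m-1}{2\omega}\end{pmatrix}
\]
is positive definite for every $r\ge 0$ and every $m\ge 3$. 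By the elementary bound $|2xy|\le x^2+y^2$, this reduces to the scalar inequality
\[
\frac{3\sqrt2\,|\sinh(2r)|}{2\omega^2} < \frac{2m-1}{2\omega},\qquad\text{i.e.}\qquad 3\sqrt2\,|\sinh(2r)| < (2m-1)\,\omega .
\]

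Because the left side does not depend on $m$, the case $m=3$ is the worst case. There the condition reads $3\sqrt2\,\sinh(2r) < 5\omega = \tfrac52\,(3\cosh(2r)+1)$ for $r\ge0$. This follows from $\sinh(2r)<\cosh(2r)$ together with $3\sqrt2\approx 4.243 < 7.5$, with plenty of room to spare.

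The resulting pointwise lower bound is
\[
\frac{2m-1}{2\omega}(\alpha^2+\beta^2)-\frac{3\sqrt2\sinh(2r)}{\omega^2}\alpha\beta \;\ge\; c(r)\,(\alpha^2+\beta^2)
\]
with $c(r)=\frac{(2m-1)\omega-3\sqrt2\sinh(2r)}{2\omega^2}>0$ everywhere. Integrating gives $(Q_m-Q_{m-1})\ge0$. If equality holds, then $\int c\,(\alpha^2+\beta^2)\rho\,dr=0$ with $c\rho>0$, which forces $\alpha=\beta=0$ almost everywhere. Since the functions are in $H^1_W$, they are continuous, so $(\alpha,\beta)\equiv(0,0)$.

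Finiteness of the integrals is not an issue, because the coefficients are bounded and $\alpha,\beta\in L^2_\rho$. There is no real obstacle: the only point to check carefully is the arithmetic in the subtraction, namely that the $\beta^2$ coefficients differ by exactly $\frac{2m-1}{2\omega}$ and that the cross terms differ by $\frac{3\sqrt2\sinh(2r)}{\omega^2}$.
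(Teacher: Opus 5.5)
Your proof is correct and is essentially the paper's argument: subtract to isolate the $\frac{2m-1}{2\omega}$ terms and the cross term, absorb the cross term via $2\alpha\beta\le\alpha^2+\beta^2$, and compare $3\sqrt2\sinh(2r)$ with $\omega$. The only difference is inessential: the paper uses the uniform bound $3\sqrt2\sinh(2r)/\omega<3$ to get the coefficient $\frac{2m-4}{2\omega}$, whereas you keep the exact positive coefficient $c(r)$.
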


\begin{proof}
From the definition \eqref{eq:Q-m}, it follows that
\[
\begin{split}
(Q_{m} - Q_{m-1})((\alpha,\beta)) &= \int_0^\infty \left(\frac{2m-1}{2\omega} (\alpha^2+\beta^2) - \frac{3\sinh(2r)}{2\omega^2} ( 2\sqrt{2}\alpha\beta)\right) \rho dr
\\&\geq \int_0^\infty \left(\frac{2m-1}{2\omega}  - \frac{3\sqrt 2 \sinh(2r)}{2\omega^2} \right) \left(\alpha^2+\beta^2\right) \rho dr
\\&\geq \int_0^\infty \left(\frac{2m-4}{2\omega}   \right) \left(\alpha^2+\beta^2\right) \rho dr,
.\end{split}\]
where we estimate $2\alpha\beta \leq \alpha^2+\beta^2$ and note that $\frac{3\sqrt{2}\sinh(2r)}{\omega}<3$. 
The proposition follows as the coefficient on the right-hand side is strictly positive when $m\geq 3$. 
\end{proof}

\subsection{Proof of the main theorem}

We combine the analysis in Section~\ref{sec:34-mode}, \ref{sec:12-mode}, and \ref{sec:0-mode} with Proposition~\ref{prop:form-mono}.
By the decomposition given in Proposition~\ref{prop:form-fourier}, the main theorem follows.
As a corollary of the analysis, we also derive the precise list of all unstable eigenfunctions and Jacobi fields.

\begin{proof}
[Proof of Theorem~\ref{thm:main}]
    As indicated in Remark~\ref{rmk:weighted-spaces}, to prove the stability, it suffices to work with a normal vector field $U$ such that both $U$ and $\n^\perp U$ are of polynomial growth, and hence the Fourier coefficients of $U$ satisfy all the weighted assumptions in the analysis obtained in the previous sections, which we now summarize.

    Recall that from Proposition~\ref{prop:form-fourier}, for a normal vector field $U = \frac{u}{g_{\theta\theta}}\N_1 + \frac{v}{g_{rr}}\N_2$ with the decomposition in \eqref{eq:uv-expansion},
    the quadratic form is
    \begin{align*}
    2\sqrt{2} Q(U)= Q_0^\theta(\alpha_0) + Q_0^r(\beta_0)+\sum_{m=1}^\infty \left( Q_m(\alpha^1_m,\beta^2_m) + Q_m(\alpha^2_m,-\beta^1_m) \right).
    \end{align*}
    By Corollaries~\ref{cor:mode-4-analysis},~\ref{cor:mode-3-analysis}, and Proposition~\ref{prop:form-mono}, for $j\ge 0,$ $Q_{3+2j}(\alpha,\beta)\geq Q_3(\alpha,\beta)\ge 0$ for any odd-even pair $(\alpha,\beta)$, and $Q_{4+2j}(\alpha,\beta)\ge Q_4(\alpha,\beta)\geq 0$ for any even-odd pair $(\alpha,\beta).$
    By Propositions~\ref{prop:mode-2-analysis},~\ref{prop:mode-1-analysis},~\ref{prop:mode-0-theta-analysis}, and~\ref{prop:mode-0-r-analysis}, if $U$ is $L^2$-orthogonal to the mean curvature and all the translation directions, then the rest of the low modes are non-negative.
    Hence, $Q(U)\ge 0.$ 
\end{proof}


    
    
    
    

    


As a consequence of the rigidity case in the analysis, we obtain a complete list of unstable modes and Jacobi fields on $M.$

\begin{cor}
    The unstable eigenfunctions of $Q$ are exactly the translations and the dilation in Table~\ref{tab:evectors}.
    The Jacobi fields of $Q$ are exactly the rotations in Table~\ref{tab:evectors}.
\end{cor}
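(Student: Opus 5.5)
The plan is to read off the spectral information mode by mode from the splitting in Proposition~\ref{prop:form-fourier}, using the rigidity statements already proved. Recall that an unstable eigenfunction is a normal field $U$ with $Q(U)<0$ in the eigen-sense, and a Jacobi field is an element of the kernel of the associated operator $L$. Since $L$ commutes with the $\bb S^1$-action $\theta\mapsto\theta+t$, both eigenspaces split into Fourier modes. Within mode $m\ge1$, the pairs $(\alpha^1_m,\beta^2_m)$ and $(\alpha^2_m,-\beta^1_m)$ decouple and are governed by the same form $Q_m$. So it suffices to determine, for each of $Q_0^\theta$, $Q_0^r$ and $Q_m$ (with the parity constraints \eqref{eq:m-even-parity}, \eqref{eq:m-odd-parity}), the negative and null eigenspaces with respect to the $L^2_\rho$ inner product $\int_0^\infty(\alpha^2+\beta^2)\rho\,dr$.

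For the higher modes, Corollary~\ref{cor:mode-4-analysis} gives $Q_4>0$ on nonzero pairs. Corollary~\ref{cor:mode-3-analysis} gives $Q_3\ge0$ with null space spanned by the rotation in Table~\ref{tab:evectors}. Proposition~\ref{prop:form-mono} then shows $Q_m>0$ on nonzero pairs for all $m\ge5$. Hence modes $m\ge3$ contribute no unstable directions, and only the two mode-$3$ rotations as Jacobi fields. For a Jacobi field in mode $3$ we note that $Q_3(\alpha,\beta)=0$ forces $(\alpha,\beta)$ to be a multiple of the listed rotation.

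For modes $m=0,1,2$ I would use a min-max (Courant) count. The translation or dilation listed in Table~\ref{tab:evectors} is an eigenfunction with negative eigenvalue ($-1/2$ or $-1$). Propositions~\ref{prop:mode-2-analysis}, \ref{prop:mode-1-analysis} and~\ref{prop:mode-0-theta-analysis} show the form is nonnegative on its $L^2_\rho$-orthogonal complement. Concretely, the orthogonality conditions there are exactly $L^2_\rho$-orthogonality to $(\sqrt2\cosh r,\sinh r)$, to $(\sinh r,\sqrt2\cosh r)$, and to $1$.

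\begin{itemize}
\item In modes $m=2$ and $m=0$ ($\theta$ part), the form is strictly positive on that complement. So the negative eigenspace is exactly one-dimensional, spanned by the geometric field, and there is no kernel.
\item In mode $m=1$, the only zero on the complement is the rotation $(\tfrac34\sinh 2r,-\tfrac{\sqrt2}{2})$. The negative space is the translation alone, and the null space is that rotation. Here I would check that the rotation is indeed orthogonal to the translation. This follows because eigenvectors of distinct eigenvalues of a self-adjoint operator are orthogonal.
\item In mode $m=0$ ($r$ part), Proposition~\ref{prop:mode-0-r-analysis} gives no negative directions and kernel spanned by $\sinh(2r)$.
\end{itemize}

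The one technical step is the standard fact behind the count: if a form is nonnegative on the orthogonal complement of a single eigenvector, then there is no second negative eigenvalue. Moreover, any null eigenvector orthogonal to it lies in the zero set of the form on that complement, and is therefore one of the rigid cases identified above. I expect this min-max bookkeeping, together with the correct pairing of $\sin/\cos$ coefficients from Table~\ref{tab:evectors}, to be the only point needing care. Self-adjointness and discreteness of the spectrum on the weighted spaces, as in Remark~\ref{rmk:weighted-spaces}, are assumed.

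Finally, assembling all modes shows that the unstable eigenspace is spanned by the four translations and the dilation, and the kernel by the five independent rotations, exactly as in Table~\ref{tab:evectors}.
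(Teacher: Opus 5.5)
Your proposal is correct and follows the same route as the paper. The paper's two-line proof invokes Theorem~\ref{thm:main} for the unstable directions and the rigidity cases of the mode estimates for the Jacobi fields. Your mode-by-mode min-max count (orthogonality of eigenvectors with distinct eigenvalues, plus nonnegativity on the orthogonal complement) is exactly the bookkeeping the paper leaves implicit.

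One detail should be made explicit: for odd $m\ge5$, chain Proposition~\ref{prop:form-mono} down to $Q_3$, as the paper does, because $Q_4\ge0$ was only proved on even--odd pairs.
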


\begin{proof}
    Theorem~\ref{thm:main} implies that any unstable eigenvector must be either a translation direction or the dilation as described in Table~\ref{tab:evectors}.
    By the rigidity cases in Corollaries~\ref{cor:mode-4-analysis},~\ref{cor:mode-3-analysis}, Propositions~\ref{prop:mode-2-analysis},~\ref{prop:mode-1-analysis},~\ref{prop:mode-0-theta-analysis}, and~\ref{prop:mode-0-r-analysis}, any Jacobi field must be a rotation vector field as in Table~\ref{tab:evectors}.
\end{proof}

As a further consequence of Theorem~\ref{thm:main} and \cite{Y16}*{Theorem~9.5}, the first and second non-zero eigenvalues of the drift Laplacian $\mathcal L=\D-\n_{x^T}$ acting on functions on $M$ are $1/2$ and $1.$
The eigenspace corresponding to the eigenvalue $1/2$ is spanned by the coordinate functions.

\subsection{Additional remarks}
\label{sec:remark}

We leave a few remarks in this section.

\begin{remark}
\label{rmk:instability-cyl}
    Theorem~\ref{thm:main} is about the stability of the embedding \eqref{eq:M-embedding} of the M\"obius shrinker.
    If one looks at the immersion \eqref{eq:F-cyl-parametrization} as in the viewpoint in \cite{LW09}, it is likely that one could prove the immersed cylindrical shrinker is $F$-unstable.
    Numerical tests suggest that as a global section of the normal bundle of the immersed cylindrical shrinker, $V=\frac{\sinh(2r)}{g_{\theta\theta}}\N_1$ provides an unstable direction which is orthogonal to the translations and the dilation.
\end{remark}


\begin{remark}
\label{rmk:entropy}
    The entropy of the M\"obius shrinker $M$ can be calculated explicitly by
    \begin{align*}
    \lambda(M)= \frac{\sqrt2\,e^{1/8}}8
    \left( K_0\!\left(\frac38\right) + 3K_1\!\left(\frac38\right)\right)
    \approx 1.65,
    \end{align*}
    where $K_n(z)=\int_0^\infty e^{-z\cosh x}\cosh(n x)\,dx$ denotes the modified Bessel function of the second kind.
    It is worth comparing it with $\lambda(S^2)\approx 1.47,$ $\lambda(S^1\times \bb R)\approx 1.52,$ and the bounds and conjectures proposed in \cite{CM20,B25}.
    As proposed in \cite{B26}, the M\"obius shrinker may have the least entropy among non-orientable shrinkers in $\bb R^4.$ 
\end{remark}

\begin{remark}
\label{rmk:Lawlor}
    In Joyce's program \cite{J15}, at a neckpinch singularity modeled on the union of two planes with the same Lagrangian angle,  one expects a surgery using Lawlor necks or the Joyce--Lee--Tsui expanders as local models.
    We remark that the entropy of such a tangent flow is $2,$ and hence by Remark~\ref{rmk:entropy}, a possible surgery process of undoing the real blow-up described in Section~\ref{sec:intro} may be topologically simpler.
\end{remark}

\begin{remark}
\label{rmk:other-LW-shrinkers}
    The shrinkers constructed in \cite{LW09} are not embedded except the M\"obius shrinker after a quotient (cf. Remark~\ref{rmk:instability-cyl}).
    Determining the (in)stability of the rest of those immersed shrinkers will contribute to the program of classifying possible generic singularities in $\bb R^4.$
\end{remark}

We remark that we had started studying the M\"obius shrinker before we found the construction in \cite{LW09}.
We mention some of our original motivations in the following remarks.

\begin{remark}
    The first-named author learned a construction of the M\"obius shrinker from Jacob Bernstein, who observed in \cite{B25} that flows of unstable perturbations of the Veronese minimal surface could lead to interesting non-orientable shrinkers. Bernstein realized that some conformal deformations of the Veronese inherit some of the symmetry of the Veronese. In particular, there is some subgroup of $SO(5)$ that fixes the Veronese and these conformal deformations are fixed by appropriate one-dimensional subgroups of this group. If one assumes there is a singularity of mean curvature flow that forms one of these $\mathbb{S}^1$ symmetries, then one is led to an ODE which can be solved to give the equation for the self-shrinking M\"obius band.    
\end{remark}


\begin{remark}
\label{rmk:LNZ-construction}
    One of the motivations of this project was to find a mean curvature flow version of the FIK shrinker constructed in \cite{FIK03}.
    With Keaton Naff and Jingze Zhu, the second-named author studied an ansatz of the form $\pr{f_1(r)e^{2i\theta},f_2(r)e^{i\theta}},$ solved the corresponding shrinker ODEs for $f_1$ and $f_2,$ and obtained a conjugated version of the M\"obius shrinker. 
    Along this line, it is particularly interesting to study possible singular behavior of flows within this form of ansatz.
    See, for example, \cite{M14,S15} for the Ricci flow analog.
\end{remark}

\end{document}